\theoremstyle{plain}
\newtheorem{thm}{Theorem}[section]
\newtheorem{theorem}[thm]{Theorem}
\newtheorem{proposition}[thm]{Proposition}
\newtheorem{corollary}[thm]{Corollary}
\theoremstyle{definition}
\newtheorem{definition}[thm]{Definition}
\newtheorem{remark}[thm]{Remark}
\newtheorem{example}[thm]{Example}
\newtheorem{question}[thm]{Question}
\newtheorem{thevarthm}[thm]{\varthmname}
\newenvironment{varthm*}[1]{\trivlist\item[]{\bf #1.}\it}{\endtrivlist}
\renewcommand\geq{\geqslant}
\renewcommand\leq{\leqslant}
\newcommand\be{\begin{eqnarray*}}
\newcommand\ee{\end{eqnarray*}}
\newcommand\calo{{\mathcal O}}
\newcommand\newop[2]{\def#1{\mathop{\rm #2}\nolimits}}
\newop\log{log}
\newop\ord{ord}
\newop\Gal{Gal}
\newop\SL{SL}
\newop\Bl{Bl}
\newop\mult{mult}
\newop\mass{mass}
\newop\div{div}
\newop\codim{codim}
\newop\sing{sing}
\newop\vdim{vdim}
\newop\edim{edim}
\newop\Ass{Ass}
\newop\size{size}
\newop\reg{reg}
\newop\satdeg{satdeg}
\newop\supp{supp}
\newop\Neg{Neg}
\newop\Nef{Nef}
\newop\Nefh{Nef_H}
\newop\Eff{Eff}
\newop\Zar{Zar}
\newop\MB{MB}
\newop\MBxC{MB\mathit{(x,C)}}
\newop\NnB{NnB}
\newop\Bigg{Big}
\newop\Effbar{\overline{\Eff}}
\def\keywordname{{\bfseries Keywords}}%
\def\keywords#1{\par\addvspace\medskipamount{\rightskip=0pt plus1cm
\def\and{\ifhmode\unskip\nobreak\fi\ $\cdot$
}\noindent\keywordname\enspace\ignorespaces#1\par}}
\def\subclassname{{\bfseries Mathematics Subject Classification
(2020)}\enspace}
\def\subclass#1{\par\addvspace\medskipamount{\rightskip=0pt plus1cm
\def\and{\ifhmode\unskip\nobreak\fi\ $\cdot$
}\noindent\subclassname\ignorespaces#1\par}}
\begin{document}
\title{Maximizing curves viewed as free curves}
\author{Alexandru Dimca and Piotr Pokora}
\date{\today}
\maketitle
\thispagestyle{empty}
\begin{abstract}
The aim of this paper is to provide a direct link between 
maximizing curves that occur in the construction of smooth algebraic surfaces having the maximal possible Picard numbers and reduced free plane curves with simple singularities. We also investigate odd degree plane curves with simple singularities having maximal total Tjurina number.
\keywords{free curves, nearly free curves, maximizing curves, ADE singularities}
\subclass{14H50, 14N20, 32S22}
\end{abstract}
\section{Introduction}

The study of complex smooth projective surfaces with a maximum Picard number is a classical topic in algebraic geometry, see for example
\cite{Beau, Persson}. One way to construct such surfaces is to start with a plane curve $C \subset \mathbb{P}^{2}_{\mathbb{C}}$ of even degree with only ${\rm ADE}$ singularities, take a double cover $X$ of $\mathbb{P}^{2}_{\mathbb{C}}$ branched along $C$, and study
$\widetilde{X}$ the minimal resolution of singularities of $X$. U. Persson found a sufficient condition on $C$ such that the corresponding surface $\widetilde{X}$ has a maximum possible Picard number and called such curves {\it maximazing}, see
\cite{Persson}. He also gave many very interesting examples of such curves in this paper.

The free divisors were introduced by K. Saito in \cite{KS} and have been the centre of much interest since then, mainly because of Terao's conjecture that the freeness of a hyperplane array is determined by the combinatorics, see \cite{Te}. This conjecture is still open even in the case of line arrangements in $\mathbb{P}^{2}_{\mathbb{C}}$, despite a lot of work in the last 40 years, see \cite{Bar} for an update of the current situation. It is natural to ask whether Terao's conjecture can somehow be generalised to all reduced plane curves, and as it turned out in \cite{SchenckToh}, such an expectation was too optimistic. In order to better understand the freeness of reduced plane curves, one tries to find certain classes of reduced curves with some extreme properties and check whether they are free, and this strategy has been explored e.g. in \cite{DJP, DimcaPokora, PSz}.

In particular, the free plane curves in $\mathbb{P}^{2}_{\mathbb{C}}$, to be defined below in Definition \ref{defF} and whose exponents are introduced in Definition \ref{defEXP}, are characterized by a certain maximality property of their total Tjurina numbers, see \cite{Dimca1,duP}, which we recall below in Theorem \ref{thm1}. Therefore it is natural to ask if there is a relation between maximizing curves in the Persson sense and free curves with only simple singularities. Our main result is that a maximizing curve of even degree $n=2m$ is exactly a free curve with the exponents $(m-1,m)$ withonly ${\rm ADE}$ singularities, see Theorem \ref{maxi}. This result is proved in Section 2, after recalling all the necessary notions and results for its proof are recalled.

In Section 3 we present some of the examples of maximizing curves given by Persson in \cite{Persson}, which in view of our Theorem \ref{maxi} gives new interesting examples of free curves.
In Example \ref{P5} we consider curves of arbitrary even degree $n=2m+2$, consisting of 2 lines and one remaining curve, denoted by $\mathcal D_{2m}$ in Remark \ref{rkNF}, which has been considered by several authors, see for instance \cite{Artal}. This curve is not free, but has a certain maximality property of its total Tjurina number, which makes it a nearly free curve in the sense of \cite{Artal,DimStic}, see Remark \ref{rkNF}. Moreover, this curve $\mathcal D_{2m}$ has other interesting properties, giving examples of curves with a maximal number of $A_5$ (resp. $A_7$) singularities, see Example \ref{exP1} (resp. Example \ref{exP2}) in the fourth section.
 
 In Section 5 we study reduced plane curves $C$ of {\it odd degree} $n=2m+1$ with at most ${\rm ADE}$ singularities and show that their total Tjurina number $\tau(C)$ is at most $3m^2+1$. When the equality holds, then we call such $C$ a maximizing curve, and note that this happens if and only if $C$ is a free curve with the exponents $(m-1,m+1)$, see Proposition \ref{propODD1}. Then we construct examples of maximizing curves of degree $5$ and $7$, and ask about the existence of such curves in the odd degrees $\geq 9$. Even if odd degree curves cannot be used to construct interesting surfaces by taking the associated double covers, see also Remark \ref{rkRODD} for odd degree covers, they have a {\it geometric interest}.  This is due to the fact that, by adding a well-chosen line to a maximizing curve of odd degree, one can obtain a maximizing curve of even degree, see for instance Example \ref{exEVEN0}. We present results devoted to our general construction where we show that adding a new irreducible component $C_2$ to a curve $C_1$ with a high total Tjurina number can lead to a maximizing curve $C=C_1 \cup C_2$ - see Theorem \ref{thmODD} and Theorem \ref{thmEVEN} for the case when $C_2$ a line, and Theorem \ref{thmODD2} and Theorem \ref{thmEVEN2} for the case where $C_2$ a smooth conic.

We hope that our note will lead to a renewed interest in the construction of maximizing curves for double covers of the complex projective plane, which will also lead to new examples of reduced free curves with simple singularities.

\section{Maximizing curves as free curves}

It is well-known that a double cover of a non-singular surface is non-singular if and only if the branch curve is non-singular. However, if the singularities of curves are mild, these singularities give rise to mild singularities for the double cover.  In order to have only rational double points as singularties for our surfaces, we focus only on branching curves with ${\rm ADE}$ singularities, and we  call such curves \textbf{simply singular}, following \cite{Persson}. The classification of ${\rm ADE}$ singularities is a very classical subject, see for instance \cite{RCS}, but for the notation reason let us recall it here:
\begin{center}
\begin{tabular}{ll}
$A_{k}$ with $k\geq 1$ & $: \, x^{2}+y^{k+1}  =0$, \\
$D_{k}$ with $k\geq 4$ & $: \, y^{2}x + x^{k-1}  = 0$,\\
$E_{6}$ & $: \, x^{3} + y^{4}=0$, \\
$E_{7}$ & $: \, x^{3} + xy^{3}=0$, \\
$E_{8}$ & $:\, x^{3}+y^{5} = 0$.
\end{tabular}
\end{center}
Recall that a $W_{n}$ singularity of a branch curve $C$ with $W \in \{A,D,E\}$ gives rise to a singularity of the corresponding double cover $X$ whose resolution contributes exactly $n$ cycles to the N\'eron-Severi group of the smooth surface $\widetilde{X}$. Based on that observation, we have the following definition.
\begin{definition}
The index of a singularity $W_{n}$ with $W\in \{A,D,E\}$ is defined as the integer $n$ in the subscript. The index of a simply singular curve $C$ is defined as the sum of the indices over all singular points of the curve, and we denote it by $\sigma(C)$.
\end{definition}
In fact, the index of a singularity is nothing else as the local Milnor number, but here we want to follow notations used by Persson in \cite{Persson}. 
If $C$ is a reduced curve in the complex projective plane $\mathbb{P}^{2}_{\mathbb{C}}$, then we denote by $\tau(C)$ its total Tjurina number, i.e., this is the sum over all singular points of $C$ of the local Tjurina numbers of $C$ at $p$. If necessary, for the definition of local Milnor and Tjurina numbers refer for instance to \cite{RCS}.
The first observation that we would like to indicate here is the following.
\begin{remark}
\label{rktau}
Let $C \subset \mathbb{P}^{2}_{\mathbb{C}}$ be a reduced simply singular curve, i.e., admitting only ${\rm ADE}$ singularities. Then
$$\sigma(C) = \tau(C).$$
This equality comes from the fact that in Arnold's notation for simple singularities, say $W_m$ with $W \in \{A,D,E\}$, the subscript $m$ denotes the Milnor number of the singularity $W_m$. All the simple singularities are quasi-homogeneous, and for such singularities the corresponding Milnor numbers and Tjurina numbers coincide. For these and other basic facts on singularities, one can see for instance \cite{RCS}.
\end{remark}
Now we focus on the discussion related to double covers. \\
Let $C \subset \mathbb{P}^{2}_{\mathbb{C}}$ be a reduced simply singular curve of even degree and consider a double cover $f : X \rightarrow \mathbb{P}^{2}_{\mathbb{C}}$ branched along $C$ and let $\pi : \widetilde{X}\rightarrow X$ be the minimal resolution of singularities of $X$.
We will call $\widetilde{X}$ the surface associated with $C$.
Denote by $\rho(\widetilde{X})$ the Picard number of $\widetilde{X}$.

\begin{proposition}
Let $C \subset \mathbb{P}^{2}_{\mathbb{C}}$ be a reduced simply singular curve of even degree $n$, then we have the following bounds:
$$\sigma(C)+1 \leq \rho(\widetilde{X}) \leq 3\frac{n}{2}\bigg(\frac{n}{2}-1\bigg)+2.$$
\end{proposition}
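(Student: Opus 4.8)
The plan is to treat the two inequalities separately: the lower bound comes from exhibiting enough independent algebraic cycles on $\widetilde X$, while the upper bound comes from bounding $\rho(\widetilde X)$ by $h^{1,1}(\widetilde X)$ and computing that Hodge number explicitly. Throughout I set $m=n/2$.

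For the lower bound, I would use the observation recalled before the statement: over each $W_k$ singularity of $C$ the surface $X$ acquires a rational double point, and the minimal resolution $\pi:\widetilde X\to X$ replaces it by exactly $k$ exceptional $(-2)$-curves, for a total of $\sigma(C)$ such curves. The classes of the exceptional curves lying over a fixed singular point have intersection matrix equal to minus the corresponding $\mathrm{ADE}$ Cartan matrix, hence negative definite, and curves over distinct points are disjoint; so the whole configuration spans a negative-definite sublattice of rank $\sigma(C)$ in $\mathrm{NS}(\widetilde X)$. I then adjoin the class $\widetilde H=(f\circ\pi)^*\calo_{\P^2}(1)$. Since $f$ has degree $2$ one has $\widetilde H^2=2>0$, while $\widetilde H\cdot E=0$ for every exceptional curve $E$ (these are contracted by $f\circ\pi$). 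A relation $a\widetilde H=\sum b_iE_i$ would give $2a^2=(\sum b_iE_i)^2\le 0$, forcing $a=0$ and then all $b_i=0$ by negative definiteness; hence $\widetilde H$ together with the exceptional classes gives $\sigma(C)+1$ independent elements of $\mathrm{NS}(\widetilde X)$, so $\rho(\widetilde X)\ge\sigma(C)+1$.

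For the upper bound I would use $\rho(\widetilde X)\le h^{1,1}(\widetilde X)=b_2(\widetilde X)-2p_g(\widetilde X)$, valid by the Lefschetz $(1,1)$-theorem once $b_1(\widetilde X)=0$. All the numerical invariants can be computed as if $C$ were smooth. Writing $L=\calo_{\P^2}(m)$, the double cover satisfies $f_*\calo_X=\calo_{\P^2}\oplus L^{-1}$ and $\omega_X=f^*(\omega_{\P^2}\otimes L)=f^*\calo_{\P^2}(m-3)$, so that $\chi(\calo_X)=\chi(\calo_{\P^2})+\chi(\calo_{\P^2}(-m))=1+\binom{m-1}{2}$, $h^1(\calo_X)=0$, and $K_X^2=2(m-3)^2$. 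Because $\mathrm{ADE}$ points are du Val, the minimal resolution is crepant and has $R^i\pi_*\calo_{\widetilde X}=0$, whence $\chi(\calo_{\widetilde X})=\chi(\calo_X)$, $q(\widetilde X)=0$ (so $b_1=0$ and $p_g(\widetilde X)=\binom{m-1}{2}$), and $K_{\widetilde X}^2=K_X^2$. Noether's formula then yields $e(\widetilde X)=12\chi(\calo_{\widetilde X})-K_{\widetilde X}^2=4m^2-6m+6$, so $b_2=e-2=4m^2-6m+4$ and
\[
h^{1,1}(\widetilde X)=b_2-2p_g=3m^2-3m+2=3\tfrac n2\bigl(\tfrac n2-1\bigr)+2,
\]
which is the desired upper bound.

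The main obstacle is the step asserting that $\chi(\calo)$ and $K^2$ are unchanged by the singularities and coincide with the smooth-branch values: this is exactly where the du Val hypothesis is essential, rationality giving the invariance of $\chi$ and the canonical (crepant) property giving the invariance of $K^2$. A conceptually useful check, and the reason the bookkeeping is consistent, is that for a du Val point the Milnor fibre and the minimal resolution have the same Euler characteristic, so deforming $C$ to a smooth curve and resolving the rational double points of $X$ change $e(\widetilde X)$ in precisely the same way; this is what lets one compute $b_2$, and hence $h^{1,1}$, independently of the configuration of singularities of $C$.
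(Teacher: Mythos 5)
Your proof is correct, and while the lower bound is handled exactly as in the paper (and in Hirzebruch's source), your upper bound takes a genuinely different route. The paper gets $h^{1,1}(\widetilde{X})=3\frac{n}{2}(\frac{n}{2}-1)+2$ by invoking Brieskorn's theorem on simultaneous resolution of rational double points: the minimal resolution $\widetilde{X}$ has the same topology and Hodge numbers as the double cover branched along a \emph{smooth} curve of degree $n$, so one may compute in the smooth case. You instead compute the invariants of the actual singular cover directly: the double-cover formulas give $\chi(\mathcal{O}_X)$, $h^1(\mathcal{O}_X)=0$ and $K_X^2=2(m-3)^2$ for any reduced branch curve, and then the du Val hypothesis enters through rationality ($R^i\pi_*\mathcal{O}_{\widetilde{X}}=0$, preserving $\chi$ and $q$) and crepancy ($K_{\widetilde{X}}=\pi^*K_X$, preserving $K^2$), after which Noether's formula and $h^{1,1}=b_2-2p_g$ yield the same value $3m^2-3m+2$; your arithmetic checks out. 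The trade-off: Brieskorn's theorem is a deeper input but delivers more (all Hodge numbers, indeed the diffeomorphism type, agree with the smooth case), whereas your argument is more elementary and self-contained, using only standard cyclic-cover formulas and the two defining properties of rational double points. One small phrasing point: the inequality $\rho(\widetilde{X})\leq h^{1,1}(\widetilde{X})$ from the Lefschetz $(1,1)$-theorem does not require $b_1=0$; you need $q(\widetilde{X})=0$ only for the identity $h^{1,1}=b_2-2p_g$, which you establish anyway.
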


The proof of this proposition can be found in \cite[Section 4]{Hirz82}, and it is based on several key facts, among which we can find the following.
\begin{enumerate}
\item[a)] The number of exceptional curves in the minimal resolution of a simple singularity $W_m$ is $m$. Hence the union of all such exceptional curves spans a vector subspace of dimension $\sigma(C)$ in the  N\'eron-Severi group ${\rm NS}(\widetilde{X}, \mathbb{R})$, which is negative definite with respect to the cup-product on $H^2(\widetilde{X},\mathbb{R})$.
\item[b)] The (topological) signature theorem implies that the cup-product on the real vector space $H^{1,1}_R:= H^2(\widetilde{X},\mathbb{R}) \cap H^{1,1}(\widetilde{X})$ has signature
$(1,h^{1,1}(\widetilde{X})-1).$
\item[c)] Lefschetz Theorem on $(1,1)$-clases implies that 
${\rm NS}(\widetilde{X}, \mathbb{R})$ can be regarded as a linear subspace of $H^{1,1}_R$
and hence $\sigma(C) \leq h^{1,1}(\widetilde{X})-1$.
\item[d)] Brieskorn Theorem on simultaneous resolution of simple surface singularities implies that the topology and the Hodge numbers of the minimal resolution $\widetilde{X}$ are the same as those of the double covering of  $\mathbb{P}^{2}_{\mathbb{C}}$ ramified along a smooth curve of degree $n$. This allows the computation of
the Hodge numbers, in particular the equality
$$h^{1,1}(\widetilde{X})=3\frac{n}{2}\bigg(\frac{n}{2}-1\bigg)+2.$$
\end{enumerate}
Based on this result, U. Persson has introduced the following definition, for a more general situation when  $\mathbb{P}^{2}_{\mathbb{C}}$ is replaced by any smooth surface, see
\cite[Definition 1.6]{Persson}.
\begin{definition}
\label{defmax}
A reduced simply singular curve $C \subset \mathbb{P}^{2}_{\mathbb{C}}$ of even degree $n\geq 4$ is a \textbf{maximizing curve} if
$$\sigma(C) = 3\frac{n}{2}\bigg(\frac{n}{2}-1\bigg)+1.$$
\end{definition}
For such a maximizing curve $C$ the associated smooth surface $\widetilde{X}$ might have the maximal possible Picard number. From this perspective, it is very interesting to classify reduced simply singular curves that are maximizing. It turns out this problem can be translated into another, namely we can focus on finding reduced simply singular free curves. Now we explain the flip side of our discussion.

Let $S := \mathbb{C}[x,y,z] = \bigoplus_{d}S_{d}$ be the graded polynomial ring. Define ${\rm Der}(S) = \{ \partial := a\cdot \partial_{x} + b\cdot \partial_{y} + c\cdot \partial_{z}, \,\, a,b,c \in S\}$ the free $S$-module of $\mathbb{C}$-linear derivations of the ring $S$.  The module 
${\rm Der}(S)$ becomes a graded $S$-module if we set
\begin{equation} \label{eqDEG} 
\deg (\partial_{x})=\deg (\partial_{y})=\deg (\partial_{z})=0.
\end{equation} 
Now for a reduced curve $C \, : f=0$ given by the defining homogeneous polynomial $f \in S_{d}$, we introduce
$${\rm D}(f) = \{ \partial \in {\rm Der}(S) \, : \, \partial\, f \in \langle f \rangle \}.$$
As we can observe, ${\rm D}(f)$ is the graded $S$-module of derivations preserving the ideal $\langle f \rangle$. For a reduced plane curve $C \, : f=0$ in $\mathbb{P}^{2}_{\mathbb{C}}$, the Euler identity  yields the following decomposition
$${\rm D}(f) = {\rm D}_{0}(f) \oplus S\cdot \delta_{E},$$
where $\delta_{E} = x\partial_{x} + y\partial_{y} + z\partial_{z}$ is the Euler derivation, and 
$${\rm D}_{0}(f) = \{ \partial \in {\rm Der}(S) \, : \, \partial \, f = 0\},$$
is the set of all $\mathbb{C}$-linear derivations of $S$ killing the polynomial $f$.
\begin{definition}
\label{defF} 
We say that a reduced curve $C \, : f=0$ in $\mathbb{P}^{2}_{\mathbb{C}}$ defined by a homogeneous polynomial $f \in S$ is \textbf{free} if ${\rm D}(f)$, or ${\rm D}_{0}(f)$, is a free graded $S$-module.
\end{definition}
The coherent sheaf $E$ on $\mathbb{P}^{2}_{\mathbb{C}}$ associated to the graded $S$-module ${\rm D}_0(f)$ is a rank $2$ vector bundle, called the sheaf of logarithmic vector fields along the curve $C$, see for instance \cite{DimSer}. The curve $C$ is free if and only if this vector bundle splits, that is
$$E=\calo(-d_1)\oplus \calo(-d_2),$$
where $\calo$ is the structure sheaf of $\mathbb{P}^{2}_{\mathbb{C}}$ and 
$(d_1,d_2)$ are the exponents of the free curve $C$, see Definition
\ref{defEXP} below for an alternative definition of exponents.
\begin{definition}
\label{defMDR} 
The minimal degree among derivations killing $f$ is defined as
$${\rm mdr}(f) = {\rm min}\{r \in \mathbb{N} \, : \, {\rm D}_{0}(f)_{r} \neq 0 \}.$$
\end{definition}
In practice, it is rather difficult to check by hand whether a certain reduced plane curve is free. However, we have the following result by du Plessis and Wall \cite{duP}, saying that the total Tjurina number $\tau(C)$ of a curve $C: f=0$ is bounded by an explicit  function $\tau(n,r)_{max}$ which depends only on $n=\deg(C)$ and $r = {\rm mdr}(f)$.

\begin{theorem}
\label{thm1}
Let $C: \,f=0$ be a reduced plane curve of degree $n$ and let $r = {\rm mdr}(f)$.
Then the following two cases hold.
\begin{enumerate}
\item[a)] If $r < n/2$, then $\tau(C) \leq \tau(n,r)_{max}= (n-1)(n-r-1)+r^2$ and the equality holds if and only if the curve $C$ is free.
\item[b)] If $n/2 \leq r \leq n-1$, then
$\tau(C) \leq \tau(n,r)_{max}$,
where, in this case, we set
$$\tau(n,r)_{max}=(n-1)(n-r-1)+r^2- \binom{2r-n+2}{2}.$$
\end{enumerate}
\end{theorem}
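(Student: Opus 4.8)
The plan is to abandon the explicit syzygy calculus and argue geometrically with the rank-two vector bundle $E$ on $\mathbb{P}^2_{\mathbb{C}}$ associated to $D_0(f)$. Everything will follow from two numerical facts together with the behaviour of the sections of $E$. First, the Chern classes of $E$ are
$$c_1(E) = -(n-1), \qquad c_2(E) = (n-1)^2 - \tau(C);$$
the second formula, which turns the Tjurina number into a Chern number, I would derive from the exact sequence defining $E$ together with the local analysis of the Jacobian (singular) scheme of $C$. Second, by the identification $H^0(\mathbb{P}^2, E(k)) = D_0(f)_k$, the integer $r = \mathrm{mdr}(f)$ is exactly the least $k$ with $H^0(E(k)) \neq 0$.

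Granting this, I would choose a nonzero $s \in H^0(E(r))$ of minimal degree and study its zero locus $Z$. Minimality of $r$ forces $Z$ to be zero-dimensional: if the saturation of $\calo(-r) \xrightarrow{s} E$ were $\calo(-r+e)$ with $e > 0$, it would produce a nonzero section of $E(r-e)$, contradicting $r = \mathrm{mdr}(f)$ --- here reducedness of $C$ (so that $f_x, f_y, f_z$ have no common factor) is what rules out a divisorial zero. Hence $s$ sits in a short exact sequence
$$0 \longrightarrow \calo(-r) \longrightarrow E \longrightarrow \cali_Z(r-n+1) \longrightarrow 0,$$
and comparing second Chern classes gives $c_2(E) = r(n-1-r) + \deg Z$. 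Combined with $c_2(E) = (n-1)^2 - \tau(C)$ this yields the master identity
$$\tau(C) = (n-1)(n-r-1) + r^2 - \deg Z.$$

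Since $\deg Z \geq 0$, the identity immediately gives $\tau(C) \leq (n-1)(n-r-1)+r^2$, which is the bound of a). For equality, $\deg Z = 0$ means $Z = \emptyset$, so the sequence above exhibits $E$ as an extension of $\calo(r-n+1)$ by $\calo(-r)$; as $H^1$ of every line bundle on $\mathbb{P}^2$ vanishes, the extension splits and $E = \calo(-r) \oplus \calo(-(n-r-1))$, i.e. $C$ is free with exponents $(r, n-r-1)$. Conversely, freeness reverses the computation to force $\deg Z = 0$. This settles a), the hypothesis $r < n/2$ serving only to make $r$ the smaller exponent.

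For b) I would squeeze a lower bound on $\deg Z$ out of minimality. Twisting the displayed sequence by $\calo(r-1)$ gives $0 \to \calo(-1) \to E(r-1) \to \cali_Z(2r-n) \to 0$, and since $H^0(E(r-1)) = 0$ (minimality of $r$) while $H^0(\calo(-1)) = H^1(\calo(-1)) = 0$, the long exact sequence yields $H^0(\cali_Z(2r-n)) = 0$: no form of degree $2r-n$ vanishes on $Z$. But the points of $Z$ impose at most $\deg Z$ conditions on $S_{2r-n}$, so $0 = h^0(\cali_Z(2r-n)) \geq \dim S_{2r-n} - \deg Z = \binom{2r-n+2}{2} - \deg Z$, whence $\deg Z \geq \binom{2r-n+2}{2}$ whenever $2r \geq n$. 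Substituting into the master identity gives b). The two parts in fact unify, since $\binom{2r-n+2}{2}$ vanishes exactly when $r < n/2$. I expect the one genuinely technical point to be the proof of $c_2(E) = (n-1)^2 - \tau(C)$ and the local bookkeeping it rests on; once that is secured, the argument with sections of a rank-two bundle is essentially formal.
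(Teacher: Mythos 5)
Your proposal is correct, but there is nothing in the paper to check it against line by line: Theorem \ref{thm1} is not proved in this article at all, it is imported from du Plessis and Wall with the citation \cite{duP}. The original argument in \cite{duP} proceeds by quite different means (it rests on their theory of discriminants and deformation-theoretic/commutative-algebra results about the Jacobian ideal), whereas yours is the sheaf-theoretic proof that has become standard in the circle of papers this article builds on (compare \cite{Dimca1}, \cite{DimSer}): identify $D_0(f)_k$ with $H^0(E(k))$, compute $c_1(E)=-(n-1)$ and $c_2(E)=(n-1)^2-\tau(C)$ from the exact sequence $0\to E\to\mathcal{O}^3\to\mathcal{I}_{\Sigma}(n-1)\to 0$, where $\Sigma$ is the Jacobian scheme of $C$, of length $\tau(C)$; take a section of minimal degree $r={\rm mdr}(f)$, use minimality to force its zero scheme $Z$ to be zero-dimensional, and read off the identity $\tau(C)=(n-1)(n-r-1)+r^2-\deg Z$. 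Granting the standard inputs you flag (local freeness of $E$, saturatedness of $D_0(f)$, the $c_2$ formula), every step is sound: $\deg Z\geq 0$ together with the splitting of the extension when $Z=\emptyset$ (via $H^1(\mathbb{P}^2,\mathcal{O}(k))=0$) gives part a), and the vanishing $h^0(\mathcal{I}_Z(2r-n))=0$ combined with the conditions count $h^0(\mathcal{I}_Z(2r-n))\geq \binom{2r-n+2}{2}-\deg Z$ gives part b). One small correction: your parenthetical attributing the absence of a divisorial zero to the reducedness of $C$ is misplaced; what excludes a divisorial component of $Z$ is exactly the minimality of $r$ (a section vanishing on a curve of degree $e>0$ factors through a nonzero section of $E(r-e)$), while reducedness is what guarantees beforehand that the Jacobian scheme is finite, that $E$ is a rank-two bundle, and that $H^0(E(k))=D_0(f)_k$. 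As for what each route buys: the paper's citation is of course shorter, and \cite{duP} contains further results (such as lower bounds on $\tau(C)$), but your master identity gives more structural information for free — for instance it exhibits $\tau(n,r)_{max}-\tau(C)$ in case a) as the length of the zero scheme of a minimal syzygy, so that nearly free curves are exactly those with $\deg Z=1$, which is the notion invoked in Remark \ref{rkNF}.
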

When all the singularities of $C:f=0$ are quasi-homogeneous, in particular when $C$ has only simple singularities, it is often easy to compute $\tau(C)$. If we have some indication on $r={\rm mdr}(f)$, in many cases we can conclude that $C$ is a free curve, even without using a computer, as in the proof of Theorem \ref{maxi} below.
\begin{definition}
\label{defEXP} 
The exponents of a reduced free curve $C \subset \mathbb{P}^{2}_{\mathbb{C}}$ of degree $n$ given by $f \in S_{n}$ are defined as the pair of positive integers of the form 
$$(d_1,d_2)=({\rm mdr}(f),n-{\rm mdr}(f)-1).$$
For a free curve, the free graded $S$-module ${\rm D}_{0}(f)$ has rank 2 as an $S$-module, and the exponents are the degrees of a basis for this module, see \cite{ST}. 
\end{definition}
When the curve is not free, the number of generators of the $S$-module
${\rm D}_{0}(f)$ can be quite large, see \cite{DStmax, HS12}. 
Since $\tau(C) = \sigma(C)$, our problem reduces to understand the link between reduced simply singular curves that are free and those that are maximizing. 
The main result of the paper can be formulated as follows.
\begin{theorem}
\label{maxi}
Let $C$ be a plane curve of degree $n=2m \geq 4$ having only ${\rm ADE}$ singularities. Then $C$ is maximizing if and only if $C$ is a free curve with the exponents $(m-1,m)$.
\end{theorem}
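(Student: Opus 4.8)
The plan is to prove both implications by pinning down the invariant $r={\rm mdr}(f)$ and feeding it into the du Plessis--Wall bounds of Theorem \ref{thm1}, using throughout that $\sigma(C)=\tau(C)$ by Remark \ref{rktau}. Write $n=2m$ and set $T:=3m(m-1)+1=3m^2-3m+1$, so that, by Definition \ref{defmax}, $C$ is maximizing precisely when $\tau(C)=T$.

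The easy implication is a direct computation. Suppose $C$ is free with exponents $(m-1,m)$. By Definition \ref{defEXP} this means $r={\rm mdr}(f)=m-1$, and since $m-1<m=n/2$ we are in case a) of Theorem \ref{thm1}; freeness forces equality there, so
$$\tau(C)=(n-1)(n-r-1)+r^2=(2m-1)\,m+(m-1)^2=3m^2-3m+1=T.$$
As $\sigma(C)=\tau(C)$, the curve is maximizing.

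For the converse, assume $\tau(C)=\sigma(C)=T$ and put $r={\rm mdr}(f)$. I would first dispose of the range $r\ge m$ using case b): a short computation shows that the function $(n-1)(n-r-1)+r^2-\binom{2r-n+2}{2}$ is strictly decreasing for $r\ge m$, with maximal value $3m^2-3m<T$ attained at $r=m$; hence $\tau(C)\le 3m^2-3m<T$ there, a contradiction, so in fact $r\le m-1$. We are then in case a), where $\tau(C)\le g(r):=(n-1)(n-r-1)+r^2$ with equality if and only if $C$ is free. Since $g$ is strictly decreasing on $\{0,1,\dots,m-1\}$ (its vertex lies at $r=m-\tfrac12$) and $g(m-1)=T$, at $r=m-1$ we get $\tau(C)=T=g(m-1)$, which forces equality and hence freeness; the exponents are then $(m-1,n-m)=(m-1,m)$, as desired.

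The main obstacle is to rule out $r\le m-2$, and here du Plessis--Wall alone does not suffice: one has $g(r)=T+s(s+1)>T$ with $s:=m-1-r\ge 1$, so the assumption $\tau(C)=T<g(r)$ is fully consistent with the upper bound and only tells us that such a $C$ would be non-free. To exclude this I would combine two inputs. The global bound furnished by the Proposition on Picard numbers gives $\tau(C)=\sigma(C)\le T$ for every simply singular curve of degree $2m$, so a hypothetical maximizing curve with $r\le m-2$ would be a non-free curve sitting exactly at this global maximum while lying strictly below its own du Plessis--Wall ceiling, namely $\tau(C)=g(r)-s(s+1)$. For $s=1$ (that is, $r=m-2$) this reads $\tau(C)=g(r)-2$, a value one can rule out by the finer classification of curves of near-maximal total Tjurina number: free and nearly free curves realize $g(r)$ and $g(r)-1$, whereas $g(r)-2$ is not realizable once $n-2r\ge 3$. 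For $s\ge 2$ the smallness of $r$ produces a Jacobian syzygy of low degree, which forces the curve into a special position---as already visible in the extreme case of $n$ concurrent lines, where $r=0$ but the unique singular point is an ordinary $n$-fold point with $\tau=(n-1)^2>T$---incompatible with having only ${\rm ADE}$ singularities. Making this last exclusion uniform and rigorous, rather than the numerology of the preceding paragraphs, is where I expect the real work to lie.
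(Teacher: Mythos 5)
You correctly handle the forward implication and, in the converse, the ranges $r=m-1$ and $r\geq m$ exactly as the paper does (du Plessis--Wall plus the monotonicity of $\tau(n,r)_{max}$). But the case you yourself isolate as ``the main obstacle'', namely $r\leq m-2$, is a genuine gap, and neither of your proposed patches closes it. The claim that $\tau(C)=\tau(n,r)_{max}-2$ is ``not realizable once $n-2r\geq 3$'' is not a theorem: the du Plessis--Wall interval allows any value down to $(n-1)(n-r-1)$, so $\tau(n,r)_{max}-2$ is admissible as soon as $r\geq 2$, and curves attaining values strictly between the free and nearly free levels (3-syzygy, plus-one generated curves, cf.\ \cite{DStmax}) do exist; non-freeness is all one can conclude. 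For $s\geq 2$ your argument (``a low-degree syzygy forces a special position incompatible with ADE singularities'') is precisely the assertion that needs proof, and the concurrent-lines example only shows that a non-ADE curve can have small $r$, not that an ADE curve cannot. Note the structural symptom: in your converse the hypothesis that all singularities are ADE is used only through $\sigma(C)=\tau(C)$, whereas the entire content of this implication is that this hypothesis forces $r$ to be large.

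The paper closes exactly this gap with one clean step, which is the idea missing from your proposal: simple singularities are quasi-homogeneous with Arnold exponent (log canonical threshold) greater than $1/2$, see \eqref{eqSS}, and the bound of Theorem \ref{sern} for curves with quasi-homogeneous singularities then gives
$$r={\rm mdr}(f)\geq \alpha_C\cdot 2m-2>\frac{1}{2}\cdot 2m-2=m-2,$$
hence $r\geq m-1$ \emph{a priori}, before any case analysis. With this lower bound in hand, your own computation (case b) excluded, equality in case a) of Theorem \ref{thm1} forcing freeness with exponents $(m-1,m)$) finishes the proof. So the skeleton of your argument is right, but without Theorem \ref{sern}, or some substitute that genuinely exploits the ADE hypothesis, the converse is not proved.
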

\begin{remark}
Notice that, according to Theorem \ref{thm1}, we can call free curves as \textbf{maximal Tjurina curves}. Due to this reason, we arrive at the slogan that the world of maximal Tjurina curves is linked with the world of maximizing curves.
\end{remark}
Before we present our proof, we need an additional preparation. Let us recall that for a reduced curve $C : f=0$ we define the Arnold exponent $\alpha_{C}$ to be the minimum of the Arnold exponents of the singular points $p$ in $C$. Using the modern language, the Arnold exponents of singular points are nothing else than the log canonical thresholds of singularities. For the special case of a quasi-homogeneous singularity, this Arnold exponent can be easily computed as follows.
Recall that the germ $(C,p)$ is quasi-homogeneous of type $(w_{1},w_{2};1)$ with $0 < w_{j} \leq 1/2$ if there are local analytic coordinates $y_{1}, y_{2}$ centered at $p=(0,0)$ and a polynomial $g(y_{1},y_{2})= \sum_{u,v} c_{u,v} y_{1}^{u} y_{2}^{v}$ with $c_{u,v} \in \mathbb{C}$, where the sum is taken over all pairs $(u,v) \in \mathbb{N}^{2}$ with $u w_{1} + v w_{2}=1$. In this case, the Arnold exponent is given by
$$c_{0}(g) = w_{1}+w_{2}.$$
It is easy to check, using the equations given at the beginning of this section, that all the simple singularities are quasi-homogeneous.
As an example, the equation for $E_6$, namely $g=y_1^3+y_2^4$, is quasi-homogeneous with respect to the weights 
$$w_1=\frac{1}{3}, \ w_2=\frac{1}{4} \text { and hence } c_{0}(E_6) =c_{0}(g) =\frac{1}{3}+\frac{1}{4}=\frac{7}{12}.$$
It is known that a quasi-homogeneous singularity $g=0$ is simple if and only if
\begin{equation} \label{eqSS} 
c_0(g)> \frac{1}{2},
\end{equation}
see for instance \cite[Corollary 7.45 and inequality 7.46]{RCS}.
For a projective hypersurface $V \, : \, f=0$ of any dimension having only isolated quasi-homogeneous singularities, the minimal degree of an element in ${\rm D}_{0}(f)$ is related to the Arnold exponent $\alpha_{V}$, see \cite[Theorem 9]{DimcaSaito} where we can find a rather technical statement. Here we need only the following special case adjusted to curves, see \cite[Theorem 2.1]{DimSer}.
\begin{theorem}[]
\label{sern}
Let $C \, : \, f = 0$ be a reduced curve of degree $n$ in $\mathbb{P}^{2}_{\mathbb{C}}$ having only quasi-homogeneous singularities. Then $${\rm mdr}(f) \geq \alpha_{C}\cdot n -2.$$
\end{theorem}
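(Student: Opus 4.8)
The plan is to translate the purely algebraic quantity ${\rm mdr}(f)$ into Hodge-theoretic data attached to the complement $U = \P^2 \setminus C$, where the Arnold exponent $\alpha_C$ has a transparent meaning as the smallest spectral number of the (quasi-homogeneous) singularities. A nonzero element of ${\rm D}_0(f)_r$ is the same as a syzygy $(a,b,c)$ of degree $r$ among the partial derivatives $f_x, f_y, f_z$, i.e.\ a relation $a f_x + b f_y + c f_z = 0$ with $a,b,c \in S_r$. Thus ${\rm mdr}(f)$ is the least degree in which the Jacobian ideal $J_f = (f_x,f_y,f_z)$ acquires a non-Koszul syzygy, equivalently the least degree of a nonzero global section of a twist $E(r)$ of the logarithmic bundle $E$. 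The goal is to show that no such syzygy can exist below degree $\alpha_C \cdot n - 2$.

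First I would set up the Griffiths-type description of $H^2(U)$. To each $A \in S$ with $\deg A = qn - 3$ one associates the rational $2$-form $\omega = A\,\Omega / f^{q}$, where $\Omega = \iota_{\delta_E}(dx\wedge dy\wedge dz)$ is the contraction by the Euler derivation $\delta_E$; the pole order $q$ defines a decreasing filtration $P^\bullet$ on $H^2(U)$. By the generalised Griffiths theorem of Dimca--Saito, the graded pieces ${\rm Gr}_P H^2(U)$ are computed by graded pieces of the Milnor algebra $M(f)=S/J_f$ in explicit degrees, and the presence of a syzygy of degree $r$ is exactly what makes the Hilbert function of $M(f)$ deviate, in a degree governed by $r$ and $n$, from that of the Jacobian algebra of a smooth curve of the same degree.

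Next I would bring in the Hodge filtration $F^\bullet \subseteq P^\bullet$ and the spectrum. For a quasi-homogeneous singularity the smallest spectral number at $p$ equals the local Arnold exponent $\alpha_{C,p}=w_1+w_2$, so $\alpha_C = \min_p \alpha_{C,p}$ is the smallest spectral number occurring along $C$. The decisive input is M.\ Saito's comparison between the pole order and Hodge filtrations: the equality $F^p H^2(U) = P^p H^2(U)$ holds in the range dictated by the minimal exponent, so a class realised by a low pole order that is \emph{not} a Hodge class can first appear only in degree at least $\alpha_C \cdot n - 2$. A syzygy of degree $r < \alpha_C \cdot n - 2$ would produce precisely such a premature discrepancy between ${\rm Gr}_P$ and ${\rm Gr}_F$, contradicting the comparison; this forces $r = {\rm mdr}(f) \geq \alpha_C \cdot n - 2$.

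The main obstacle is the accurate numerical dictionary together with this filtration comparison: identifying which graded piece of the pole order filtration a degree-$r$ syzygy feeds, and proving that $F$ and $P$ agree down to the level fixed by $\alpha_C$. This is exactly the content of \cite[Theorem 9]{DimcaSaito} and rests on M.\ Saito's theory of mixed Hodge modules and the microlocal $V$-filtration rather than on elementary commutative algebra; once it is in place, the statement for curves follows as the special case recorded in \cite[Theorem 2.1]{DimSer}. A self-contained argument would have to reprove both the Griffiths-type identification of ${\rm Gr}_P$ with the Milnor algebra and the spectrum bound on where $F$ and $P$ may first diverge, and I expect the latter to be the genuinely hard analytic ingredient.
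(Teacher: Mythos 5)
The paper offers no proof of Theorem \ref{sern}: it is quoted directly from \cite[Theorem 2.1]{DimSer}, itself a special case of \cite[Theorem 9]{DimcaSaito}, which is exactly where your outline also bottoms out. Your Hodge-theoretic sketch --- identifying ${\rm D}_0(f)_r$ with degree-$r$ syzygies of the Jacobian ideal, feeding them into the pole order filtration on $H^2(U)$ via forms $A\,\Omega/f^q$, and invoking the comparison of the Hodge and pole order filtrations governed by the minimal spectral number, which for quasi-homogeneous singularities is the Arnold exponent $\alpha_C$ --- is a faithful reconstruction of the strategy of those cited references, so your treatment is essentially the same as, and as complete as, the paper's own.
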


After such a preparation we are finally ready to present our proof of Theorem \ref{maxi}.
\begin{proof}
If $C$ is a free curve with exponents $(m-1,m)$, then Theorem \ref{thm1}  a) implies that
$$\tau(C)=\tau(2m,m-1)_{max}=3m(m-1)+1.$$
This equality implies that $C$ is maximizing, using Definition \ref{defmax} and Remark \ref{rktau}.
Conversely, assume now that $C \, : f=0$ is a maximizing curve of even degree $n=2m$ with $m\geq 2$ and 
$$\tau(C) =3\frac{n}{2}\bigg(\frac{n}{2}-1\bigg)+1 = 3m(m-1)+1.$$
We are going to show that $C$ is indeed free.
Using \eqref{eqSS} and Theorem \ref{sern}, we get
$$r= {\rm mdr}(f) \geq \alpha_{C}\cdot 2m -2 >\frac{1}{2} \cdot 2m -2 = m-2.$$
It follows that $r \geq m-1$ and then  Theorem \ref{thm1} implies that
$$\tau(C) \leq \tau(2m,m-1)_{max}=3m(m-1)+1.$$
Indeed, we can check directly that the function $\tau(n,r)_{max}$ is strictly decreasing as a function with respect to $r$ on the interval $[0,n-1]=[0,2m-1]$. Since $C$ is maximizing, we have in fact
$$\tau(C) =3m(m-1)+1= \tau(2m,m-1)_{max},$$
which implies that $r=m-1$ and then by Theorem \ref{thm1} a) the curve $C$ is free, with exponents $(m-1,m)$.
This completes the proof.
\end{proof}

\begin{remark}
\label{rk1}
It follows from the above proof that a simply singular curve $C$ of degree $n=2m$ which is maximizing must have some very singular points.
Indeed, for such a curve $C$ one has
$$m-1 \geq 2m \cdot \alpha_C -2$$
and hence
$$\alpha_C \leq \frac{m+1}{2m}.$$
For instance, if the curve $C$ has only $A_k$ singularities, and since
$$c_0(A_k)=\frac{1}{2} +\frac{1}{k+1}$$
it follows that there are some singularities $A_k$ on $C$ with $k \geq 2m-1$. In particular, there is no maximizing curve having only nodes $A_1$ and cusps $A_2$ as singularities.
Similarly, one can show that there is no arrangement $\mathcal{A} \subset \mathbb{P}^{2}_{\mathbb{C}}$ of $n=2m$ lines with $m\geq 4$ having $A_{1}$ and $D_{4}$ singularities (the only simple singularities for a line arrangement) that is a maximizing curve, see \cite[Corollary 4.6]{DimSer}. This shows that in the search of maximizing curves one can use techniques and results devoted to the study of reduced free curves and/or non-existence of such curves. In \cite{DimcaPokora}, we showed that there does not exists any conic-line arrangement of degree $6$ with $A_{1}$, $A_{3}$, $D_{4}$ singularities that is free, so it means that there is no such maximizing curve (cf. \cite[Section 4]{Yang}). 

\end{remark}

\section{Examples of maximazing curves}
Here we present some examples of maximizing curves found by U. Persson in  \cite{Persson}. In view of Theorem \ref{maxi}, they give new examples of free curves.

\begin{example}[Persson's tri-conical arrangement]
\label{P1}
Consider the following arrangement of conics $\mathcal{C} = \{C_{1},C_{2},C_{3}\} \subset \mathbb{P}^{2}_{\mathbb{C}}$, where
\begin{equation*}
\begin{array}{l}
C_{1} :  x^{2} + y^{2} - z^{2} =0,\\
C_{2} :  2x^{2} + y^{2} + 2xz  =0,\\
C_{3} :  2x^{2} + y^{2} - 2xz  =0.\\
\end{array}
\end{equation*}
This arrangement has exactly $5$ singular points, two $A_{1}$ singularities, one $A_{3}$ singularity, and two $A_{7}$ singularities located at $P_{\pm} = (\pm 1 : 0 : 1)$, see \cite[Proposition 2.1]{Persson}. Therefore, the total Tjurina number of $\mathcal{C}$ is equal to
$$\tau(\mathcal{C}) = 2\cdot 1 + 3\cdot 1 + 7 \cdot 2 = 19.$$
Hence $\mathcal{C}$, regarded as the union of the 3 conics, is maximizing and hence a free curve of degree 6 with exponents $(2,3)$.
The fact that the two $A_7$ singularities and the $A_3$ singularity are all on the line $y=0$ plays no role here, but it is essential for the construction of the curve $\mathcal{C}'$ in Example \ref{P2} and of the curve $C'_7$ in Example \ref{exODD1} below.
\end{example}
\begin{remark}
The above configuration is unique up to the projective equivalence. It has two very visible automorphisms, namely $x \mapsto -x$ and $y \mapsto -y$.
\end{remark}
Persson's tri-conical arrangement can be extended to a degree $8$ curve (which is maximizing and hence free, in two distinct ways, as follows.
\begin{example}
\label{P2}
Let $\mathcal{C} = \{C_{1},C_{2},C_{3}\} \subset \mathbb{P}^{2}_{\mathbb{C}}$ be Persson's tri-conical arrangement of conics from Example \ref{P1}. Consider now the arrangement $\mathcal{C}' = \{\ell_{1},\ell_{2},C_{1},C_{2},C_{3}\}$, where $\ell_{1} \, : y=0$ and $\ell_{2} \, : z=0$. This arrangement has $2$ singularities of type $D_{10}$, one singularity of type $D_{6}$, $2$ singularities of type $D_{4}$, and $3$ singularities of type $A_{1}$, see \cite[3.1.1]{Persson}. Since $$\tau(\mathcal{C}')=2\cdot 10 + 6 + 2\cdot 4 + 3\cdot 1 = 37,$$ 
then $\mathcal{C}'$ is a maximizing curve of degree $8$, and hence a free curve with exponents $(3,4)$.

Another way to get a maximizing octic from the sextic $\mathcal{C}$ is to add a new conic, namely 
$$C_4: 2x^2+y^2-2z^2+i\sqrt 2yz=0,$$
see  \cite[Remark 2.8]{Persson}. The arrangement $\mathcal{C}'' = \{C_{1},C_{2},C_{3},C_4\}$ has $2$ singularities of type $D_{10}$, $2$ singularities of type $D_{6}$, $2$ singularities of type $A_1$ and one singularity of type $A_3$, see \cite[3.1.2]{Persson}. It follows that
$$\tau(\mathcal{C}'')=2\cdot 10 + 2\cdot 6 + 2\cdot 1 + 3 = 37.$$ 
As above $\mathcal{C}''$ is a maximizing curve of degree $8$, and hence a free curve with exponents $(3,4)$.

\end{example}
Now we would like to discuss some arrangements constructed using the Steiner quartic curve which also come from \cite{Persson}. Let us recall that all nodal cubics are projectively equivalent and the dual curve to such a nodal cubic is a singular irreducible quartic curve which has exactly $3$ singular points of type $A_{2}$. Observe that such a quartic curve is unique up to the projective equivalence and it is called the Steiner quartic curve. In computations we  can use the following equation for the Steiner quartic:
$$F(x,y,z) = -\frac{1}{4}y^{2}x^{2}-z^{2}(x^{2}+y^{2}-2xy)+x^{2}yz + y^{2}xz.$$
\begin{example}[Maximizing sextic curve derived from the Steiner quartic curve]
\label{P3}
 Let us consider the arrangement $\mathcal{T}_{6} \subset \mathbb{P}^{2}_{\mathbb{C}}$ which consists of the Steiner quartic and two lines, namely $\ell_{1} \, : x=0$ and $\ell_{2} \, : y=0$, which are the tangents to the Steiner quartic at two of its cusps. This curve has $3$ singular points of type $A_{1}$, one singularity of type $A_{2}$ and $2$ singularities of type $E_{7}$. Since
 $$\tau(\mathcal{T}_{6}) = 3\cdot 1 + 2\cdot 1 + 2\cdot 7 = 19,$$
 then $\mathcal{T}_{6}$ is maximizing in degree $6$, and hence free with exponents $(2,3)$.
\end{example}
\begin{example}[Maximizing octic curve derived from the Steiner quartic curve]
\label{P4}
Consider the arrangement $\mathcal{T}_{8}$ consisting of the Steiner quartic, one bitangent line (we can take line at infinity), and three cuspidal tangents. Such an arrangement has $6$ singular points of type $A_{1}$, $2$ points of type $A_{3}$, one point of type $D_{4}$ (since three cuspidal tangents are concurrent), and $3$ points of type $E_{7}$,  see \cite[3.1.2]{Persson}. Since
$$\tau(\mathcal{T}_{8}) = 6\cdot 1 + 2\cdot 3 + 1\cdot 4 + 3\cdot 7 = 37,$$
then $\mathcal{T}_{8}$ is a maximizing curve of degree $8$, and hence free with exponents $(3,4)$.
\end{example}

\begin{remark}
In \cite{Yang}, Yang provides a complete classification of maximizing sextics with only ${\rm ADE}$ singularities. By Theorem \ref{maxi} all these curves are free. Among those curves, we can detect new examples of irreducible reduced curves with ${\rm ADE}$ singularities and there are altogether $128$ irreducible maximizing sextics. Based on that classification result, \textbf{we have at least $128$ simply singular irreducible free plane sextics}.
\end{remark}

\begin{example}[Maximizing curves of arbitrary even degrees]
\label{P5}
In \cite[Lemma 7.8]{Persson} it is shown that the curve
$$\mathcal C_{2(m+1)}: xy[(x^m+y^m+z^m)^2-4(x^my^m+y^mz^m+z^mx^m)]=0$$
is maximizing, and hence we get a curve of even degree $2(m+1)$ which is free with exponents $(m,m+1)$. Following Persson's analysis in \cite{Persson}, notice that $\mathcal C_{2(m+1)}$ has exactly $2m$ singular points of type $D_{m+2}$, exactly $m$ points of type $A_{m-1}$, and one $A_{1}$ point. Based on that
$$\tau(\mathcal C_{2(m+1)}) = 2m\cdot(m+2) + m\cdot(m-1) + 1 = 3m(m+1) + 1,$$ 
so $\mathcal{C}'$ is indeed a maximizing curve.
\end{example}

\begin{remark}
\label{rkNF} The curve $\mathcal C_{2(m+1)}$ in Example \ref{P5} consists clearly of the two lines $x=0$ and $y=0$, and the curve
$$\mathcal D_{2m}: (x^m+y^m+z^m)^2-4(x^my^m+y^mz^m+z^mx^m)=0.$$

This curve is irreducible for $m$ odd, and has exactly 4 smooth components when $m=2k$ is even, namely
\begin{equation*}
\begin{array}{l}
C_{1} :\quad  x^{k} + y^{k} + z^{k} =0,\\
C_{2} :\, -x^k + y^k + z^k =0,\\
C_{3} :\quad  x^k - y^k + z^k =0,\\
C_{4} :\quad x^k + y^k - z^k =0.
\end{array}
\end{equation*}

It is easy to see that $\mathcal D_{2m}$ has $3m$ singularities of type $A_{m-1}$, see \cite[Lemma 7.5]{Persson}. In particular, if $m=4$, then we obtain an arrangement of $4$ smooth conics intersecting along $12$ tacnodes - the maximal possible number of such singular points for $4$ conics. Recall also that an arrangement of $4$ smooth conics and $12$ tacnodes is unique up to the projective equivalence.

This curve $\mathcal D_{2m}$ is not free, but it is nearly free, as proved in \cite[Theorem 3.12]{Artal}.
A nearly free curve is a reduced plane curve $C:f=0$ of degree $n$ such that $r={\rm mdr}(f) < n/2$ and $\tau(C)=\tau(n,r)_{max}-1$, or
$r={\rm mdr}(f) = n/2$ and $\tau(C)=\tau(n,r)_{max}$, see \cite{Dimca1,DimStic} for details, where other characterizations of nearly free curves are also given. In other words, the $\mathcal D_{2m}$ has a Tjurina number which is very close to being maximal.
In the next section we show that these curves 
enjoy some other interesting maximality properties.
\end{remark}

\section{Curves with maximal numbers of $A_k$ singularities}
It turns our that the arrangement $\mathcal{D}_{n}$, with $n=2m$ is extreme in a different sense. In order to explain this phenomenon, we will follow Langer's variation on the Bogomolov-Miyaoka-Yau inequality \cite{Langer} which uses the notion of local orbifold Euler numbers $e_{orb}$ of singular points. Here we will use a specific version of Langer's inequality that is adjusted to our setting of plane curves in the complex projective plane.
\begin{theorem}[Langer]
\label{langer}
Let $C \subset \mathbb{P}^{2}_{\mathbb{C}}$ be a reduced curve of degree $n$ and assume that $(\mathbb{P}^{2}_{\mathbb{C}},\alpha C)$ is a an effective log canonical pair for a suitably chosen $\alpha\in [0,1]$, then one has
$$\sum_{p \in {\rm Sing}(C)} 3\bigg(\alpha(\mu_{p}-1)+1-e_{orb}(p,\mathbb{P}^{2}_{\mathbb{C}},\alpha C)\bigg)\leq (3\alpha - \alpha^{2})n^{2}-3\alpha n,$$
where ${\rm Sing}(C)$ denotes the set of all singular points, $\mu_{p}$ is the Milnor number of a singular point $p$, and $e_{orb}$ denotes the local orbifold Euler number of $p$.
\end{theorem}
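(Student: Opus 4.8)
The plan is to obtain this statement as a specialization of Langer's general orbifold Bogomolov--Miyaoka--Yau inequality from \cite{Langer} to the surface $\mathbb{P}^2_{\mathbb{C}}$ with the weighted boundary $\alpha C$. Recall that Langer's theorem gives, for a smooth projective surface $X$ and an effective log canonical boundary $D$, the bound $(K_X+D)^2 \le 3\, e_{orb}(X,D)$, where $e_{orb}(X,D)$ is the global orbifold Euler number. The first step is to record this inequality together with the additive decomposition of the global orbifold Euler number into a contribution from the open part and a sum of the local orbifold Euler numbers; for our pair this reads
\[
e_{orb}(\mathbb{P}^2_{\mathbb{C}},\alpha C)=e(\mathbb{P}^2_{\mathbb{C}}\setminus C)+(1-\alpha)\,e\big(C\setminus {\rm Sing}(C)\big)+\sum_{p\in {\rm Sing}(C)}e_{orb}(p,\mathbb{P}^2_{\mathbb{C}},\alpha C).
\]
The weight $(1-\alpha)$ on the smooth part of the curve and the abstract local terms $e_{orb}(p,\mathbb{P}^2_{\mathbb{C}},\alpha C)$ are exactly the data furnished by Langer's construction, which I would quote as a black box.

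The second step is the global intersection and Euler-characteristic computation on $\mathbb{P}^2_{\mathbb{C}}$. Since $K_{\mathbb{P}^2_{\mathbb{C}}}=\calo(-3)$ and $C\sim\calo(n)$, one has $K_{\mathbb{P}^2_{\mathbb{C}}}^2=9$, $K_{\mathbb{P}^2_{\mathbb{C}}}\cdot C=-3n$ and $C^2=n^2$, whence
\[
(K_{\mathbb{P}^2_{\mathbb{C}}}+\alpha C)^2=9-6\alpha n+\alpha^2 n^2.
\]
For the topological side, combining the genus--degree formula with the relation $\mu_p=2\delta_p-r_p+1$ between the Milnor number, the delta-invariant and the number $r_p$ of local branches, the Euler characteristic of the curve is expressed purely through the Milnor numbers as $e(C)=3n-n^2+\sum_p\mu_p$. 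Feeding $e(\mathbb{P}^2_{\mathbb{C}}\setminus C)=3-e(C)$ and $e(C\setminus{\rm Sing}(C))=e(C)-|{\rm Sing}(C)|$ into the decomposition above, and then into Langer's inequality $3\,e_{orb}(\mathbb{P}^2_{\mathbb{C}},\alpha C)\ge (K_{\mathbb{P}^2_{\mathbb{C}}}+\alpha C)^2$, all the terms involving $|{\rm Sing}(C)|$ and $\sum_p\mu_p$ reorganize so that the local orbifold numbers can be transposed to the left. The naive local contributions collect into $\sum_p 3\big(\alpha(\mu_p-1)+1\big)$, while the purely global remainder simplifies to $(3\alpha-\alpha^2)n^2-3\alpha n$, yielding the asserted inequality.

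The step I expect to be the main obstacle is not the algebra, which is routine once the right decomposition is in place, but rather pinning down Langer's conventions precisely: one must confirm the exact normalization of both the global and the local orbifold Euler numbers in \cite{Langer}, verify that the smooth-locus term indeed carries the weight $(1-\alpha)$ as written, and check that no additional global contribution survives beyond $(3\alpha-\alpha^2)n^2-3\alpha n$. A parallel point is to match the hypothesis that $(\mathbb{P}^2_{\mathbb{C}},\alpha C)$ be an effective log canonical pair with the precise assumptions of Langer's theorem: effectivity of $K_{\mathbb{P}^2_{\mathbb{C}}}+\alpha C=\calo(\alpha n-3)$ forces $\alpha n\ge 3$, while the log canonical condition forces $\alpha\le\alpha_C$, so that the admissible weights $\alpha$ are constrained by the interplay between $\deg C=n$ and the Arnold exponent $\alpha_C$ of $C$.
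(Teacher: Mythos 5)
The first thing to note is that the paper contains no proof of this statement at all: it is imported as a known result of Langer \cite{Langer}, with the remark that the detailed steps of its specialization to plane curves are written out in \cite{Pokora2}. Your proposal reconstructs exactly that specialization, and the reconstruction is correct. Writing $s=|{\rm Sing}(C)|$ and using the formula $e(C)=3n-n^2+\sum_p\mu_p$ (which follows, as you say, from the genus--degree formula together with $\mu_p=2\delta_p-r_p+1$), your decomposition gives
\[
3e_{orb}(\mathbb{P}^{2}_{\mathbb{C}},\alpha C)=9-3\alpha(3n-n^2)-3\alpha\sum_p\mu_p-3(1-\alpha)s+3\sum_p e_{orb}(p,\mathbb{P}^{2}_{\mathbb{C}},\alpha C),
\]
and comparing with $(K_{\mathbb{P}^{2}_{\mathbb{C}}}+\alpha C)^2=9-6\alpha n+\alpha^2 n^2$ in Langer's inequality $(K_X+D)^2\leq 3e_{orb}(X,D)$ and transposing the local terms yields precisely
\[
\sum_{p\in {\rm Sing}(C)}3\Big(\alpha(\mu_p-1)+1-e_{orb}(p,\mathbb{P}^{2}_{\mathbb{C}},\alpha C)\Big)\leq (3\alpha-\alpha^2)n^2-3\alpha n,
\]
so the algebra closes up with no leftover global term, as you anticipated. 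Your final caveats are also well placed: the hypothesis ``effective log canonical pair'' in the statement is exactly the hypothesis of Langer's theorem (effectivity of $K_{\mathbb{P}^{2}_{\mathbb{C}}}+\alpha C$ amounts to $\alpha n\geq 3$, and log canonicity to $\alpha$ not exceeding the minimal log canonical threshold, i.e.\ the Arnold exponent), and these are the conditions the paper invokes later when the proof of Proposition \ref{pp1} chooses $\alpha=\frac{k+2}{2(k+1)}$. The only place where your write-up is looser than a self-contained proof would require is that the weight $(1-\alpha)$ on the smooth locus and the definition of the local terms $e_{orb}(p,\mathbb{P}^{2}_{\mathbb{C}},\alpha C)$ are taken as a black box from \cite{Langer}; since the entire content of the theorem resides in Langer's definitions and his BMY-type inequality, this is acceptable, and it is in fact more than the paper itself provides.
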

In principle, local orbifold Euler numbers are very hard to compute (even though these numbers are analytic in nature), but fortunately these are known for ${\rm ADE}$ singularities due to Langer's work.
Here we only present the crucial steps to get the result, mostly to avoid unnecessary repetition of ideas existing in the literature - the detailed description of each step is given, for instance, in \cite{Pokora2}.

\begin{proposition}
\label{pp1}
Let $C \subset \mathbb{P}^{2}_{\mathbb{C}}$ be a reduced curve of degree $n\geq 6$ with at most ${\rm ADE}$ singularities. Denote by $t_{2k+1}$ the number of $A_{2k+1}$ singularities of $C$ with $k\geq 1$. Then 
$$t_{2k+1} \leq \frac{(k+2)(5k+4)}{12(k^{3}+4k^{2}+4k+1)}n^{2}-\frac{k+2}{2(k^{2}+3k+1)}n,$$

\end{proposition}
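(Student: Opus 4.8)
The plan is to apply Langer's inequality (Theorem \ref{langer}) to $C$ with the single carefully chosen weight $\alpha=\frac{k+2}{2(k+1)}$, which is precisely the Arnold exponent $c_0(A_{2k+1})=\frac12+\frac{1}{2k+2}$ of the singularity type whose count we wish to estimate. The reason for this choice is that, at the value of $\alpha$ equal to the log canonical threshold of a given singularity, Langer's local orbifold Euler number of that singularity degenerates to $0$; hence $e_{orb}(p,\mathbb{P}^{2}_{\mathbb{C}},\alpha C)=0$ at every $A_{2k+1}$ point $p$. Since $\mu_p=2k+1$ there, each such point contributes to the left-hand sum of Theorem \ref{langer} exactly
$$3(2k\alpha+1)=3\Big(\frac{k(k+2)}{k+1}+1\Big)=\frac{3(k^2+3k+1)}{k+1}.$$

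First I would record, from Langer's explicit computations of the local orbifold Euler numbers of ${\rm ADE}$ singularities, the two facts that make the argument run: that with this value of $\alpha$ the pair $(\mathbb{P}^{2}_{\mathbb{C}},\alpha C)$ is effective log canonical, and that every singular point $p$ of $C$ satisfies $\alpha(\mu_p-1)+1-e_{orb}(p,\mathbb{P}^{2}_{\mathbb{C}},\alpha C)\geq 0$. In particular the contribution of each singular point other than an $A_{2k+1}$ point is nonnegative. Granting this, I would keep only the $A_{2k+1}$ terms on the left of Theorem \ref{langer} and discard all the others, which yields
$$t_{2k+1}\cdot\frac{3(k^2+3k+1)}{k+1}\leq (3\alpha-\alpha^2)n^2-3\alpha n.$$

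It then remains to evaluate the right-hand side at $\alpha=\frac{k+2}{2(k+1)}$. A direct computation gives $3\alpha-\alpha^2=\alpha(3-\alpha)=\frac{(k+2)(5k+4)}{4(k+1)^2}$ and $3\alpha=\frac{3(k+2)}{2(k+1)}$, so after multiplying both sides of the displayed inequality by $\frac{k+1}{3(k^2+3k+1)}$ and using the factorization $(k+1)(k^2+3k+1)=k^3+4k^2+4k+1$, one arrives exactly at the claimed bound.

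I expect the main obstacle to be the input quoted in the second paragraph rather than the final algebra. One needs Langer's explicit values of the local orbifold Euler numbers $e_{orb}$ for all ${\rm ADE}$ singularity types at the chosen $\alpha$, both to confirm the vanishing $e_{orb}(A_{2k+1})=0$ and to check that the remaining singular points contribute nonnegatively, together with the verification that the resulting pair is effective log canonical. These are precisely the technical facts collected in \cite{Langer} and recalled step by step in \cite{Pokora2}; once they are in place, the estimate follows by the bookkeeping above.
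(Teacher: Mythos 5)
Your proposal is correct and follows essentially the same route as the paper's own proof: both apply Langer's inequality with the single weight $\alpha=\frac{k+2}{2(k+1)}$, observe that the local orbifold Euler number of an $A_{2k+1}$ point vanishes there (the paper via the explicit formula $e_{orb}=\frac{(k+2-2(k+1)\alpha)^{2}}{4(k+1)}$ evaluated at the endpoint of its interval of validity, you via the lct degeneration, which is the same fact), keep only the $A_{2k+1}$ terms, and carry out the identical algebra ending with $(k+1)(k^{2}+3k+1)=k^{3}+4k^{2}+4k+1$. The technical inputs you flag as the remaining obstacles --- effective log canonicity of the pair at this $\alpha$ and nonnegativity of the contributions of the other singular points --- are precisely the facts the paper also does not verify in detail but defers to \cite{Langer} and \cite{Pokora2}.
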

\begin{proof}
We will use Theorem \ref{langer} directly. Let us recall that for a singular point $p \in {\rm Sing}(C)$ which is of type $A_{2k+1}$, we have
$$e_{orb}(p,\mathbb{P}^{2}_{\mathbb{C}},\alpha C) =  \frac{(k+2-2(k+1)\alpha)^{2}}{4(k+1)}\,\, \text{ provided that }\,\, \alpha \in \bigg[\frac{k}{2k+2}, \frac{k+2}{2k+2}\bigg].$$
\begin{gather*}
3t_{2k+1}\bigg(2k \alpha + 1 - \frac{(k+2-2(k+1)\alpha)^{2}}{4(k+1)}\bigg) \leq \sum_{p \in {\rm Sing}(C)}3\bigg(\alpha(\mu_{p}-1)+1 - e_{orb}(p, \mathbb{P}^{2}_{\mathbb{C}},\alpha C)\bigg) \\ \leq (3\alpha - \alpha^{2})n^{2} - 3\alpha n.
\end{gather*}
Now our claim follows from the above inequality (after a simple calculation) and by taking $\alpha=\frac{k+2}{2(k+1)}$, since for such a selection of $\alpha$ we are both effective and log canonical.
\end{proof}
\begin{example}
\label{exP1}
Consider the curve $\mathcal{D}_{12}$ from Remark \ref{rkNF}. According to the numerics presented there, we obtain a reduced plane curve of degree $12$ having $18$ singular points of type $A_{5}$. Using the upper-bound proved in Proposition 2.3, we see that 
$$t_{5} \leq \frac{14}{99}n^{2} - \frac{2}{11}n.$$
Taking $n=12$ we obtain
$$t_{5} \leq  \frac{200}{11} \approx 18.18,$$
so $\mathcal{D}_{12}$ maximizes the number of $A_{5}$ singularities in degree $n=12$.
\end{example}
\begin{example}
\label{exP2}
Consider the curve $\mathcal{D}_{16}$ from Remark \ref{rkNF}. According to the numerics presented there, we obtain a reduced plane curve of degree $16$ having $24$ singular points of type $A_{7}$. Using the upper-bound proved in Proposition 2.3, we see that 
$$t_{7} \leq \frac{5}{48}n^{2} - \frac{5}{38}n.$$
Taking $n=16$ we obtain
$$t_{7} \leq \frac{1400}{57} \approx 24.561,$$
so  $\mathcal{D}_{16}$ maximizes the number of $A_{7}$ singularties in degree $n=16$.
\end{example}
\begin{remark}
Taking the proof almost verbatim from Proposition \ref{pp1}, we can get an estimate of the number of $E_{6}$ singularities for a reduced plane curve $C$ of degree $d\geq 6$ with at worst ${\rm ADE}$ singularities. We need to apply Theorem \ref{langer} to the pair $(\mathbb{P}^{2}_{\mathbb{C}},\frac{1}{2}C)$ and for $p \in {\rm Sing}(C)$ which is of type $E_{6}$ we take $$e_{orb}\bigg(p,\mathbb{P}^{2}_{\mathbb{C}}, \frac{1}{2} C\bigg) = \frac{1}{48}.$$ 
If we denote by $e_{6}$ the number of $E_{6}$ singularities of $C$, then
$$e_{6}\leq \frac{20}{167}d^{2}-\frac{24}{167}d.$$
In particular, for $d=18$ we arrive at $$e_{6} \leq \frac{6048}{167} \approx 36.21557.$$ It turns our that there exists a reduced curve $C_{18}$ of degree $18$ with exactly $36$ singularities of type $E_{6}$ constructed by Bonnaf\'e \cite[Example 3.4]{Bonnafe}, which means that $C_{18}$ maximizes the number of $E_{6}$ singularities in degree $18$. Moreover, $C_{18}$ is nearly-free, and this can be checked by a direct computation.
\end{remark}

\section{On the maximal Tjurina number of simply singular curves of odd degree}

Let $C \subset \mathbb{P}^{2}_{\mathbb{C}}$ be a reduced curve of degree $n=2m+1\geq 5 $ with at most ${\rm ADE}$ singularities. Then the result analog to Theorem \ref{maxi} is the following.

\begin{proposition}
\label{propODD1}
Let $C: f=0$ be a reduced curve of degree $n=2m+1\geq 5$ with at most ${\rm ADE}$ singularities. Denote by $r=mdr(f)$ the minimal degree of a derivation in $D_0(f)$ and by $\tau(C)$ the total Tjurina number. Then one of the following two situations occurs.
\begin{enumerate}
\item[a)] $r =m-1$ and  $\tau(C) \leq \tau(n,r)_{max}= 3m^2+1$.

\item[b)] $r =m$ and  $\tau(C) \leq \tau(n,r)_{max}= 3m^2$.

\item[c)] $r >m$ and  $\tau(C) < 3m^2-1$.

\end{enumerate}
In both cases $a)$ and $b)$, the equality holds if and only if the curve $C$ is free.
\end{proposition}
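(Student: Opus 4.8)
The plan is to mirror the structure of the proof of Theorem \ref{maxi}, adapting the lower bound on $r={\rm mdr}(f)$ to the odd-degree setting and then reading off the three cases from the monotonicity of $\tau(n,r)_{max}$. First I would establish the lower bound on $r$. Since $C$ has only ${\rm ADE}$ singularities, all of which are quasi-homogeneous, inequality \eqref{eqSS} gives $\alpha_C > 1/2$, so Theorem \ref{sern} yields
$$r \geq \alpha_C \cdot n - 2 > \frac{1}{2}(2m+1) - 2 = m - \frac{3}{2}.$$
Because $r$ is an integer, this forces $r \geq m-1$. This is the exact analogue of the step $r \geq m-1$ in the even-degree proof, and it rules out every value $r \leq m-2$ at once.

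Next I would compute the relevant values of $\tau(n,r)_{max}$ and check which branch of Theorem \ref{thm1} applies. With $n=2m+1$, the threshold $n/2 = m + \tfrac{1}{2}$ separates case a) of Theorem \ref{thm1} (where $r < n/2$) from case b) (where $n/2 \leq r$). For $r=m-1$ and $r=m$ we have $r < n/2$, so Theorem \ref{thm1} a) applies and gives
$$\tau(2m+1,m-1)_{max} = (2m)(m+1) + (m-1)^2 = 3m^2 + 1,$$
$$\tau(2m+1,m)_{max} = (2m)(m) + m^2 = 3m^2,$$
which are exactly the bounds in a) and b); moreover, since these fall in case a), equality holds if and only if $C$ is free, as claimed. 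For $r > m$, i.e. $r \geq m+1$, I would invoke the strict monotonicity of $\tau(n,r)_{max}$ in $r$ on $[0,n-1]$ — the same fact used in the proof of Theorem \ref{maxi} — to conclude $\tau(C) \leq \tau(2m+1,m)_{max}= 3m^2$ with the value strictly dropping as $r$ increases past $m$; evaluating at $r=m+1$ (now in case b), where one subtracts the binomial correction $\binom{2r-n+2}{2}=\binom{3}{2}=3$) gives
$$\tau(2m+1,m+1)_{max} = (2m)(m-1) + (m+1)^2 - 3 = 3m^2 - 2 < 3m^2 - 1,$$
establishing c).

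The one point that needs genuine care, rather than routine substitution, is the claim in case c) that $\tau(C) < 3m^2-1$ for \emph{all} $r > m$, not merely at $r=m+1$. Here I would rely on the fact that $\tau(n,r)_{max}$ is strictly decreasing in $r$ across the whole interval $[0,n-1]$, so that its value at any $r \geq m+1$ is at most its value at $r=m+1$, namely $3m^2-2$, which is indeed strictly below $3m^2-1$. I should verify the monotonicity continues to hold \emph{across} the transition point $n/2$, since $\tau(n,r)_{max}$ is defined piecewise; this is the main technical obstacle, but it is precisely the assertion already used and stated as ``strictly decreasing on $[0,n-1]$'' in the proof of Theorem \ref{maxi}, so it may be cited. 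The final phrase ``In both cases a) and b)'' should be read as referring to the freeness characterization, which follows directly from the equality clause of Theorem \ref{thm1} a), applicable since both $r=m-1$ and $r=m$ satisfy $r<n/2$.
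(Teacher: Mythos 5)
Your proposal is correct and follows essentially the same route as the paper's own (very terse) proof: the paper likewise derives $r \geq m-1$ from Theorem \ref{sern} exactly as in the proof of Theorem \ref{maxi}, and then reads off the three cases from Theorem \ref{thm1}. Your explicit evaluations of $\tau(2m+1,r)_{max}$ at $r=m-1,m,m+1$ (including the binomial correction $\binom{3}{2}=3$ and the observation that $r=m$ still falls in case a) of Theorem \ref{thm1}, so the freeness criterion applies there too) are precisely the computations the paper leaves to the reader.
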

\proof
Theorem \ref{sern} implies that $r \geq m-1$, exactly as in the proof of
Theorem \ref{maxi}. The three cases follow from Theorem \ref{thm1}.
\endproof

In view of the above result, we propose the following.

\begin{definition}
\label{defmaxODD}
A reduced simply singular curve $C \subset \mathbb{P}^{2}_{\mathbb{C}}$ of odd degree $n=2m+1\geq 5$ is a \textbf{maximizing curve} if
$$\tau(C) = 3m^2+1.$$
\end{definition}

\begin{remark}
\label{rkRODD}
Let $k>0$ be a divisor of $n=2m+1=\deg C$,   and let $X$ be the cyclic covering of $\mathbb{P}^{2}_{\mathbb{C}}$
of order $k$ ramified along $C$. If $k \geq 3$ and if we impose the condition that $X$ has only ${\rm ADE}$ singularities, then only the following cases are possible.
\begin{enumerate}
\item[a)] $k =3$ and  $C$ has singularities of type $A_j$ for $1\leq j \leq 4$. The corresponding singularities of $X$ have type $A_2$, $D_4$, $E_6$ and $E_8$.

\item[b)] $k =5$ and  $C$ has singularities of type $A_1$ and $A_2$. The corresponding singularities of $X$ have type $A_4$ and $E_8$.

\item[c)] $k \geq 7$, $k$ odd and  $C$ has singularities of type $A_1$.
The corresponding singularities of $X$ have type $A_{k-1}$.
\end{enumerate}
Using Theorem \ref{sern}, it is easy to see that in all these cases the curve $C$ cannot be free, except when $n=k=3$ and $C:f=xyz=0$ is a triangle. However, this cubic curve $C$ is not maximizing, according to our Definition \ref{defmaxODD}. Therefore, odd degree maximizing curves cannot yield interesting surfaces by taking cyclic covers of $\mathbb{P}^{2}_{\mathbb{C}}$ along them and using the same approach as in the even degree case.
\end{remark}

\bigskip

\begin{question}
\label{qmaxODD} Do maximizing curves $C_{2m+1}$ exist in any degree $n=2m+1\geq 5$ ?
\end{question}

\bigskip

Before dealing with the existence of maximizing curves of any odd degree, we show that the equality in case $b)$ can occur for  curves of any degree $n=2m+1\geq 5$.

\begin{corollary}
\label{corODD1}
For any integer $m \geq 2$, the curve
$$\mathcal C_{2m+1}: f=x[(x^m+y^m+z^m)^2-4(x^my^m+y^mz^m+z^mx^m)]=0$$
is free with exponents $(m,m)$ and satisfies the equality in Proposition \ref{propODD1} in case $b)$.
\end{corollary}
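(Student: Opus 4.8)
The plan is to exploit the decomposition $f=x\cdot g$, where $g=0$ defines the curve $\mathcal D_{2m}$ of Remark \ref{rkNF}, and to reduce the assertion to the two inputs of Proposition \ref{propODD1}: the exact value of $\tau(\mathcal C_{2m+1})$ and the exact value of $r=\mathrm{mdr}(f)$. Once I establish $\tau(\mathcal C_{2m+1})=3m^2$ and $r=m$, case b) of Proposition \ref{propODD1} yields freeness together with the equality $\tau=\tau(n,m)_{\max}=3m^2$, and the exponents are then $(\mathrm{mdr}(f),n-\mathrm{mdr}(f)-1)=(m,m)$ by Definition \ref{defEXP}.

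First I would analyse the singularities. Writing $g=(x^m-y^m-z^m)^2-4y^mz^m$ and using the invariance of $g$ under permutations of the coordinates, the $3m$ points of type $A_{m-1}$ of $\mathcal D_{2m}$ are distributed $m$ on each coordinate line: restricting $g$ to $x=0$ gives $(y^m-z^m)^2$, so the $m$ points $(0:\zeta:1)$ with $\zeta^m=1$ lie on $x=0$, and a local expansion shows the branch(es) there are tangent to $y=\zeta z$. The added line $x=0$ is transverse to this tangent direction, passing transversally through each of these $m$ points; attaching a smooth transverse branch to an $A_{m-1}$ singularity of this shape produces a $D_{m+2}$ singularity (the local model $s(t^2-s^m)$ has Milnor number $m+2$ and the $D$-type tangent cone $st^2$). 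The remaining $2m$ points of $\mathcal D_{2m}$, on $y=0$ and $z=0$, are untouched by $x=0$ and stay of type $A_{m-1}$, and $x=0$ meets $\mathcal D_{2m}$ in no further points by a Bézout count. As a cross-check, $\mathcal C_{2(m+1)}=y\cdot\mathcal C_{2m+1}$, and the points on $x=0$ are unaffected by the extra line $y=0$, so they carry the same type $D_{m+2}$ assigned to them in Example \ref{P5}. Hence
$$\tau(\mathcal C_{2m+1})=m\cdot(m+2)+2m\cdot(m-1)=3m^2.$$

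Next I would pin down $r$. The Arnold exponents are $c_0(A_{m-1})=\tfrac12+\tfrac1m$ and $c_0(D_{m+2})=\tfrac{m+2}{2(m+1)}$, and since the latter is the smaller one, $\alpha_{\mathcal C_{2m+1}}=\tfrac{m+2}{2(m+1)}$. Feeding this into Theorem \ref{sern} with $n=2m+1$ gives
$$r\geq \frac{m+2}{2(m+1)}(2m+1)-2=\frac{2m^2+m-2}{2(m+1)}=(m-1)+\frac{m}{2(m+1)}>m-1,$$
so $r\geq m$. On the other hand $\tau(\mathcal C_{2m+1})=3m^2$ excludes case c) of Proposition \ref{propODD1}, which would force $\tau<3m^2-1$, hence $r\leq m$. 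Therefore $r=m$, and the conclusion follows from case b) exactly as announced.

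The main obstacle is that the value $\tau=3m^2$ does not by itself decide freeness: a curve with $r=m-1$ and $\tau=3m^2$ would fall under case a) of Proposition \ref{propODD1}, where $\tau_{\max}=3m^2+1$, and would be merely nearly free. Separating the free case $r=m$ from the nearly free case $r=m-1$ rests entirely on the lower bound from Theorem \ref{sern}, which is sharp here precisely because the Arnold exponent of $D_{m+2}$ beats that of $A_{m-1}$ and produces exactly $r>m-1$. The other delicate point is the local claim that a transverse line converts each $A_{m-1}$ on $x=0$ into a $D_{m+2}$; I would either confirm this through the Milnor-number and tangent-cone computation sketched above, or simply import it via the identification $\mathcal C_{2(m+1)}=y\cdot\mathcal C_{2m+1}$ together with Example \ref{P5}.
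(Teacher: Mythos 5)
Your proof is correct, but it takes a genuinely different route from the paper's at the one nontrivial step, namely pinning down $r=\mathrm{mdr}(f)=m$. The paper obtains the lower bound $r\geq m$ externally: it quotes \cite[Theorem 3.12]{Artal} for the fact that $f'=(x^m+y^m+z^m)^2-4(x^my^m+y^mz^m+z^mx^m)$ has $\mathrm{mdr}(f')=m$, then applies the addition--deletion result \cite[Proposition 3.1]{DIS} to the union $\mathcal C_{2m+1}=\mathcal D_{2m}\cup\{x=0\}$ to get $m\leq r\leq m+1$, and finally excludes $r=m+1$ via Theorem \ref{thm1}~b) --- which is exactly your exclusion of case~c) of Proposition \ref{propODD1}, so the upper bounds coincide. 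You instead derive $r\geq m$ intrinsically from the singularity inventory: since $c_0(D_{m+2})=\tfrac{m+2}{2(m+1)}<c_0(A_{m-1})$, Theorem \ref{sern} gives $r\geq \tfrac{(m+2)(2m+1)}{2(m+1)}-2=(m-1)+\tfrac{m}{2(m+1)}>m-1$, hence $r\geq m$. This is the same mechanism used in the proof of Theorem \ref{maxi}, exploited more sharply: there the crude bound $\alpha_C>\tfrac12$ sufficed, while here the exact Arnold exponent of the $D_{m+2}$ points is needed to gain the extra unit that separates the free case $r=m$ from the nearly free case $r=m-1$ --- a distinction you correctly identify as the crux. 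Your route buys self-containedness (only tools already stated in the paper, no appeal to \cite{Artal} or \cite{DIS}); the paper's route buys generality, since addition--deletion determines $\mathrm{mdr}$ of a union from $\mathrm{mdr}$ of the big component even in situations where the singularity types alone would not force the bound. Finally, your singularity count ($m$ points of type $D_{m+2}$ on $x=0$, the other $2m$ points of type $A_{m-1}$ untouched, so $\tau=3m^2$) agrees with what the paper asserts without proof, and both of your justifications are sound: the direct one (each point $(0:\zeta:1)$, $\zeta^m=1$, has $i(\mathcal D_{2m},L)_p=2$ by the B\'ezout count, so Proposition \ref{propODD2}, case~2, produces $D_{m+2}$) and the cross-check against Example \ref{P5}.
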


\proof

The curve  $\mathcal C_{2m+1}$ has exactly $m$ singular points of type $D_{m+2}$ and $2m$ points of type $A_{m-1}$. Based on that
$$\tau(\mathcal C_{2m+1}) = m\cdot(m+2) + 2m\cdot(m-1)  = 3m^2.$$ 
It remains to show that $r:={\rm mdr}(f)$ satisfies $r=m$.
If we set
$$f'=(x^m+y^m+z^m)^2-4(x^my^m+y^mz^m+z^mx^m),$$
then it was shown in  \cite[Theorem 3.12]{Artal} that $r'={\rm mdr}(f')=m$.
Then using \cite[Proposition 3.1]{DIS}, it follows that $m \leq r \leq m+1$.
The case $r=m+1$ is excluded using Theorem \ref{thm1} case $b)$.
Therefore $r=m$ and the curve $\mathcal C_{2m+1}$ is free with exponents $(m,m)$.
\endproof

Coming back to the Question \ref{qmaxODD}, we show first that maximizing curves exist in degree $n=5$.

\begin{corollary}
\label{corODD2}
There are exactly 4 free quintics with exponents $(1,3)$ satisfying the equality in Proposition \ref{propODD1} in case $a)$, namely

\[
\begin{array}{ll}
({\rm H}1): \quad E_7+A_5+A_1&   :\,\, f=xz(y^3-xz^2),\\
({\rm H}2): \quad  E_6+A_7&  :\,\, f=z(y^4-x^3z),\\
({\rm H}3): \quad  D_8+D_5&  :\,\, f=yz(y^3-x^2z),\\
({\rm H}4): \quad  2D_6+A_1&  :\,\, f=xyz(y^2-xz).
\end{array}\]
\end{corollary}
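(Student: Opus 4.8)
The plan is to identify the curves satisfying the equality in case $a)$ with the free quintics of exponents $(1,3)$, and then to classify the latter via the linear derivation their equations must admit. By Proposition \ref{propODD1} with $m=2$, a reduced quintic $C:f=0$ with only ${\rm ADE}$ singularities realises the equality $\tau(C)=3m^2+1=13$ of case $a)$ precisely when $r:={\rm mdr}(f)=1$ and $C$ is free, and its exponents are then $(m-1,m+1)=(1,3)$. So it suffices to classify, up to projective equivalence, the free quintics with ${\rm ADE}$ singularities and ${\rm mdr}(f)=1$, and to match them with the list H1--H4.

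First I would verify that the four listed curves qualify, which is direct. Each defining polynomial is a binomial, say $f=x^{a_1}y^{b_1}z^{c_1}-x^{a_2}y^{b_2}z^{c_2}$; solving the two linear equations expressing that both monomials have weight $0$ produces a nonzero diagonal derivation $\delta=\alpha x\partial_x+\beta y\partial_y+\gamma z\partial_z$ with $\delta f=0$. As none of H1--H4 is a cone, ${\rm mdr}(f)=1$, so Proposition \ref{propODD1} a) forces freeness with exponents $(1,3)$; the stated ${\rm ADE}$ configurations are read off from the equations, and in each case $\tau(C)=13$ by Remark \ref{rktau}.

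For the converse I would argue as follows. If $C:f=0$ is such a curve, the degree-one syzygy gives a nonzero linear derivation $\delta=a\partial_x+b\partial_y+c\partial_z$ (with $a,b,c\in S_1$) killing $f$; equivalently $f$ is annihilated by the linear vector field attached to a matrix $M\in\mathfrak{gl}_3(\mathbb{C})$. After a projective change of coordinates I may assume $M$ is in Jordan normal form, and the argument proceeds by Jordan type: $M$ diagonal with distinct eigenvalues, $M$ diagonal with a repeated eigenvalue, and the two non-semisimple types. In the diagonal case $f$ is a sum of degree-$5$ monomials of a common weight, and since distinct eigenvalues force the exponent vectors of these monomials to lie on a line, $f$ factors as a monomial times a product of binomials in the resulting one-parameter family; writing $f=x^ay^bz^c(u-v)$ with $x^ay^bz^c$ squarefree and $u,v$ coprime monomials then reduces the problem to a finite combinatorial search at the coordinate points, where all the relevant singularities are quasi-homogeneous and their ${\rm ADE}$ types are computed from the exponents. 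Imposing only ${\rm ADE}$ singularities together with $\tau(C)=13$ should leave exactly the four binomials H1--H4, which are pairwise projectively inequivalent since their singularity multisets $E_7+A_5+A_1$, $E_6+A_7$, $D_8+D_5$, $2D_6+A_1$ are distinct.

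The main obstacle is the case analysis needed to discard every degenerate configuration. The repeated-eigenvalue diagonal case forces $f=z^k h(x,y)$, i.e. a non-reduced curve (for $k\ge 2$) or a line together with $4$ concurrent lines (for $k=1$), and the latter carries an ordinary quadruple point, which is not simple; similarly the non-semisimple types are expected to yield invariant quintics having a coordinate point where several branches are mutually tangent, again a non-${\rm ADE}$ singularity. The same phenomenon must be ruled out inside the surviving diagonal case: a chain of three or more collinear monomials produces three smooth branches tangent to a common line, whose Milnor number can be read off from the pairwise intersection multiplicities and is not of type ${\rm ADE}$, so $f$ must in fact be a genuine binomial. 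Checking that these exclusions are exhaustive, and that the remaining finite list of binomial quintics with ${\rm ADE}$ singularities and $\tau=13$ is exactly H1--H4, is the technical heart of the proof.
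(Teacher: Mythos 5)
Your forward direction is fine, and your framing of the equivalence (equality in case a) of Proposition \ref{propODD1} for $m=2$ $\Leftrightarrow$ free with exponents $(1,3)$ $\Leftrightarrow$ simply singular quintic with $\tau(C)=13$) agrees with the paper. But note that the paper does not classify anything itself: its proof is a citation of Wall \cite{Wall}, who showed that H1--H4 are the only quintics with only ${\rm ADE}$ singularities and $\tau(C)=13$, followed by an application of Proposition \ref{propODD1}. Your plan replaces that citation by a self-contained classification of reduced quintics with ${\rm mdr}(f)=1$ via the Jordan type of the associated linear vector field. That is a genuinely different and in principle viable route, but as written it contains a step that is simply false, plus two steps that are only asserted.

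The concrete failure is your exclusion claim in the diagonal case: that three or more collinear monomials produce ``three smooth branches tangent to a common line'', hence a non-${\rm ADE}$ singularity, hence that $f$ must be a monomial times a single binomial. This is wrong. Consider
$$f=z(xy-z^2)(xy-2z^2)=x^2y^2z-3xyz^3+2z^5,$$
whose exponent vectors $(2,2,1)$, $(1,1,3)$, $(0,0,5)$ are collinear. This quintic is reduced, is not a cone, and is killed by $x\partial_x-y\partial_y$, so ${\rm mdr}(f)=1$; its only singularities are two $D_6$ points at $(1:0:0)$ and $(0:1:0)$ (at each point the two conic branches are mutually tangent, but the line $z=0$ is transverse to them, so the singularity is ${\rm ADE}$). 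Such curves are eliminated from the corollary only because $\tau(C)=12<13$, i.e.\ by the Tjurina count, not by ${\rm ADE}$-ness; so your reduction to binomials has to be redone using the equality $\tau(C)=13$, which essentially amounts to re-proving the relevant part of Wall's analysis rather than a quick combinatorial observation. In addition, the two non-semisimple Jordan types are dismissed with ``are expected to yield'' non-simple singularities (they can be handled: a $2\times 2$ block forces $\delta_n f=0$ by nilpotency, hence $f$ a cone, while the full $3\times 3$ block forces $f=z\,Q_1Q_2$ with two conics and the line $z=0$ pairwise tangent at a single point, which is not ${\rm ADE}$), and the concluding finite search among binomials is asserted (``should leave exactly the four'') rather than performed. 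So the skeleton of your argument is salvageable, but the key exclusion step as stated would fail, and the remaining steps are not yet proofs.
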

 Here we have listed first in each case the singularity types present on the curve. 
 
 \proof This claim follows from Wall's paper \cite{Wall}, beginning of Section 5. Indeed, Wall shows that these 4 quintics are the only quintics $C$ having only ADE singularities and such that $\tau(C)=13$. Since in this case $m=2$, we have just to apply our Proposition \ref{propODD1} to end the proof of our claim.
 \endproof

Note that Wall's paper \cite{Wall} lists all the simply singular quintics $C$ with $\tau(C) \geq 12$. These curves give examples of either nearly free curves in situation $a)$, and hence $\tau(C)= \tau(n,r)_{max}-1$, or of free curves in the situation $b)$.

To construct maximizing curves in odd degrees $\geq 7$ one may try to use the following approach. Let $C_1 : f_1=0$ be a  reduced curve and let $C_2$ be a smooth curve, not an irreducible component of $C_1$, such that the union curve $C = C_1 \cup C_2$ has only ${\rm ADE}$ singularities along $C_2$. For $p \in C_1 \cap C_2$, denote by $i(C_1,C_2)_p$ the intersection multiplicity of the two curves $C_1$ and $C_2$ at $p$. Recall that by B\'ezout Theorem one has
\begin{equation}
\label{eqBEZ}
\sum_p i(C_1,C_2)_p= \deg C_1 \cdot \deg C_2,
\end{equation}
where the sum is over all intersection points $p \in C_1 \cap C_2$.
The following result describes the possible singularities of $C$ along the curve $C_2$.

\begin{proposition}
\label{propODD2}
Let $p \in C_1 \cap C_2$ be a singular point of type $W \in \{A,D,E\}$ for the curve $C=C_1 \cup C_2$. 
We denote by
$$\Delta \tau(C,p)=\tau(C,p)-\tau(C_1,p)$$
the variation of Tjurina number at $p$ when passing from $C_1$ to $C$, and by $i(C_1,C_2)_p$ the intersection multiplicity of $C_1$ and $C_2$ at $p$.
Then the following cases are possible:

\begin{enumerate}
\item[1)] $p$ is a smooth point on $C_1$ and
the smooth curve $C_2$ is tangent of order $k\geq 0$ to $C_1$ at the point $p$.
Then $(C,p)$ is a singularity of type $A_{2k+1}$ and
$$\Delta \tau(C,p)- \frac{3}{2}i(C_1,C_2)_p=\frac{k-1}{2}.$$
\item[2)] $p$ is a singular point of type $A_k$ on $C_1$ for some $k\geq 1$ and the tangent at $p$ of the smooth curve $C_2$ is not in the tangent cone of the singularity $(C_1,p)$. Then
$(C,p)$ is a singularity of type $D_{k+3}$ and
$$\Delta \tau(C,p)- \frac{3}{2}i(C_1,C_2)_p=0.$$
\item[3)] $p$ is a singular point of type $A_1$ on $C_1$ and the smooth curve $C_2$ is tangent of order $k\geq 1$ to one of the two smooth branches. 
 Then
$(C,p)$ is a singularity of type $D_{2k+4}$ and
$$\Delta \tau(C,p)- \frac{3}{2}i(C_1,C_2)_p=\frac{k}{2}.$$
\item[4)] $p$ is a singular point of type $A_2$ on $C_1$ and the tangent at $p$ of the smooth curve $C_2$ is the tangent cone to this cusp. Then
$(C,p)$ is a singularity of type $E_7$ and
$$\Delta \tau(C,p)- \frac{3}{2}i(C_1,C_2)_p=\frac{1}{2}.$$

\end{enumerate}

\end{proposition}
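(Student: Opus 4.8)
The plan is to isolate a single Milnor-number computation that governs all four cases, and then to separate the purely numerical part of the statement from the identification of the singularity type.

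First I would pass to local analytic coordinates $(u,v)$ centered at $p$. Since $(C,p)$ is assumed to be an ${\rm ADE}$ germ and $(C_1,p)$ is either smooth or of type $A_k$, both germs are quasi-homogeneous, so by Remark \ref{rktau} we have $\tau(C,p)=\mu(C,p)$ and $\tau(C_1,p)=\mu(C_1,p)$, where $\mu$ denotes the local Milnor number. Hence $\Delta\tau(C,p)=\mu(C,p)-\mu(C_1,p)$. The key input is the classical additivity formula for the Milnor number of a union of two germs with no common branch at $p$,
$$\mu(C,p)=\mu(C_1,p)+\mu(C_2,p)+2\,i(C_1,C_2)_p-1 .$$
As $C_2$ is smooth at $p$ we have $\mu(C_2,p)=0$, and the formula collapses to the single identity
$$\Delta\tau(C,p)=2\,i(C_1,C_2)_p-1,\qquad\text{i.e.}\qquad \Delta\tau(C,p)-\tfrac32\,i(C_1,C_2)_p=\tfrac12\big(i(C_1,C_2)_p-2\big).$$
This already produces the four right-hand sides simultaneously, once the intersection multiplicity is known in each case; what remains is to pin down the type of $(C,p)$.

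I would then treat the four configurations by a direct choice of normal form. In case $1)$ take $C_1\colon v=0$ and $C_2\colon v=u^{k+1}$, so that $i(C_1,C_2)_p=k+1$; the substitution $v\mapsto v+\tfrac12u^{k+1}$ turns $C$ into $v^2-\tfrac14u^{2k+2}=0$, an $A_{2k+1}$ point, and $\tfrac12(i-2)=\tfrac{k-1}{2}$. In case $2)$ the germ $(C_1,p)$ has multiplicity $2$ and $C_2$ is transverse to its tangent cone, so $i(C_1,C_2)_p=2$; writing $C_1\colon u^2-v^{k+1}=0$ and $C_2\colon v=0$, the union is $v(u^2-v^{k+1})=0$, which after swapping $u$ and $v$ is the standard $D_{k+3}$ normal form, and $\tfrac12(i-2)=0$. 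In case $3)$ take $C_1\colon uv=0$ and $C_2\colon v=u^{k+1}$ tangent of order $k$ to the branch $v=0$; then $i(C_1,C_2)_p=(k+1)+1=k+2$, and after $v\mapsto v+\tfrac12u^{k+1}$ the union becomes $u\big(v^2-\tfrac14u^{2k+2}\big)=0$, a $D_{2k+4}$ point, giving $\tfrac12(i-2)=\tfrac{k}{2}$. In case $4)$ the cusp $C_1\colon u^2-v^3=0$ is parametrized by $(u,v)=(t^3,t^2)$, and any smooth $C_2$ whose tangent is the cuspidal line $u=0$ pulls back to $t^3-O(t^4)$, so $i(C_1,C_2)_p=3$; the union is a two-branch germ formed by the $(2,3)$-cusp and a smooth branch meeting it with multiplicity $3$, and the additivity formula gives $\mu(C,p)=2+0+2\cdot3-1=7$, whence $\tfrac12(i-2)=\tfrac12$.

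The main obstacle is not the numerics but the type identification, and it is sharpest in case $4)$: there are three ${\rm ADE}$ singularities with $\mu=7$, namely $A_7$, $D_7$ and $E_7$, so I must argue that the germ is genuinely $E_7$. I would do this by comparing branch data, which are analytic invariants: $(C,p)$ consists of one $(2,3)$-cuspidal branch and one smooth branch meeting with intersection multiplicity $3$, and this matches $E_7$ while excluding both $A_7$ (two smooth branches with intersection $4$) and $D_7$ (a smooth branch meeting a $(2,5)$-cusp with intersection $2$). In cases $1)$ and $3)$ the only subtlety is the linear change of variables symmetrizing the two tangent branches so that the $A$- respectively $D$-normal form becomes visible, while case $2)$ is already in normal form after interchanging $u$ and $v$.
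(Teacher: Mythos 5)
Your verification of the four listed cases is correct, and for the numerical identities you take a genuinely different route from the paper. The paper proceeds case by case: in each configuration it writes down a normal form, reads off the type of $(C,p)$ and the intersection multiplicity, and the stated value of $\Delta \tau(C,p)-\frac{3}{2}i(C_1,C_2)_p$ then follows by arithmetic. You instead derive all four right-hand sides at once from the classical formula $\mu(C,p)=\mu(C_1,p)+\mu(C_2,p)+2\,i(C_1,C_2)_p-1$, which, since $C_2$ is smooth at $p$ and all germs involved are quasi-homogeneous (so $\tau=\mu$, cf.\ Remark \ref{rktau}), collapses to $\Delta \tau(C,p)-\frac{3}{2}i(C_1,C_2)_p=\frac{1}{2}\left(i(C_1,C_2)_p-2\right)$. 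This cleanly separates the numerics, which only require $i(C_1,C_2)_p$, from the identification of the singularity type, and it explains uniformly where the four defects come from. Your treatment of case 4) is also more complete than the paper's: the paper merely asserts that the resulting germ is $E_7$, whereas you pin it down by branch data (one $(2,3)$-cusp plus a smooth branch meeting it with multiplicity $3$, which excludes $A_7$ and $D_7$). In cases 1)--3) your normal forms agree with the paper's; the only difference is that the paper handles case 3) by reducing it to case 2), viewing the node's second branch together with $C_2$ as an $A_{2k+1}$-germ met transversally by the remaining branch, while you compute directly.

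There is, however, a genuine omission: you prove only the ``in each listed configuration'' half of the proposition. The statement also asserts that these four configurations are the \emph{only} ones possible when $(C,p)$ is an ${\rm ADE}$ singularity, and this exhaustiveness is precisely what is used downstream: in the proofs of Theorems \ref{thmODD}, \ref{thmEVEN}, \ref{thmEVEN2} and \ref{thmODD2}, the sum of $\Delta \tau(C,p)-\frac{3}{2}i(C_1,C_2)_p$ over \emph{all} points of $C_1 \cap C_2$ is rewritten in terms of the counts $N(A_1)$, $N(A_{2j+1})$, $N(D_{2j+4}^n)$, $N(E_7)$, which is legitimate only if every intersection point falls under one of the four cases. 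The paper settles this in two short steps that your plan never addresses: first, an ${\rm ADE}$ germ has multiplicity at most $3$, and since the smooth branch $C_2$ contributes $1$, the germ $(C_1,p)$ has multiplicity at most $2$ and is therefore smooth or of type $A_k$; moreover the irreducible types $A_{2j}$, $E_6$, $E_8$ cannot occur as $(C,p)$, which has at least two branches. Second, the configurations not listed --- $C_2$ tangent to the tangent cone of an $A_k$-singularity with $k\geq 3$ --- yield germs of multiplicity $3$ with a triple-line tangent cone that do not match $E_7$ (the only reducible ${\rm ADE}$ germ with such a cone), hence are non-simple and excluded by hypothesis. This addition is short, but without it your argument establishes a weaker statement than the one the paper needs.
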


\proof
Since a simple singularity has multiplicity at most $3$ it is clear that $p$ must be either a smooth point or an $A_k$-singularity for $C_1$. Note that the singularities $A_j$ with $j$-even and $E_6$, $E_8$, being irreducible, cannot occur as singularities $(C,p)$.

In Case 1, $p$ is a smooth point on $C_1$ and one can find local coordinates $(u,v)$ centered at $p$ such that
$C_1: u+v^{k+1}=0$, $C_2:u=0$. It follows that the intersection multiplicity
$i(C_1,C_2)_p$ is equal to $k+1$ and the germ $(C,p)$ is given by $u(u+v^{k+1})=0$, that is $(C,p)$ is a singularity of type $A_{2k+1}$. 

In Case 2,  $p$ is a singular point of type $A_k$ on $C_1$ and one can find local coordinates $(u,v)$ centered at $p$ such that
$C_1: u^2+v^{k+1}=0$, $C_2:v=0$. It follows that the intersection multiplicity
$i(C_1,C_2)_p$ is equal to $2$ and the germ $(C,p)$ is given by $v(u^2+v^{k+1})=0$, that is $(C,p)$ is a singularity of type $D_{k+3}$.

In Case 3, the resulting singularity of $(C,p)$ is of type $D_{2k+4}$, since this situation coincides with the case of an $A_{2k+1}$-singularity and a transversal curve $C_2$ discussed in the case 2. In this case one also has $i(C_1,C_2)_p=k+2.$

Finally, in Case 4, $(C_1,p)$ is an $A_2$ singularity given by $u^2-v^3=0$ and $C_2$ is tangent to the line $u=0$. Then the resulting singularity has type $E_7$, and moreover
$i(C_1,C_2)_p=3$.

All the other cases for the pair $(C_1,p)$ and $C_2$ give rise to non-simple singularities.
\endproof
We see that a singularity of type $D_{k}$ can occur on the curve $C$ in two different ways, either as in Case 2 above, or as in Case 3 above.
If a singularity $D_j$ comes as in Case 2, we call it {\it transversal} and denote it by $D_j^t$. On the other hand, if a singularity $D_j$ comes as in Case 3, we call it {\it non-transversal} and denote it by $D_j^n$. Let
$N(X)$ denotes the number of singularities of type $X$ that the curve $C$ has along the line $L$. With this notation we have the following result, obtained when $C_2$ is a line $L$.

\begin{theorem}
\label{thmODD}
Let $C_1:f_1=0$ be a curve of even degree $2m\geq 4$ having only ${\rm ADE}$ singularities
such that
$$\tau(C_1)=3m(m-1)+1-\delta,$$
for some integer $\delta \geq 0$. Let $L$ be a line such that the  curve $C =C_1 \cup L$ has only ${\rm ADE}$ singularities and 
\begin{equation}
\label{eqODD}
2\delta+N(A_1) \leq \sum_{j >1}(j-1)N(A_{2j+1})+\sum_{j >0}jN(D_{2j+4}^n)+N(E_7).
\end{equation}
Then the curve $C$ is maximizing of degree $2m+1$ and equality holds in \eqref{eqODD}.
\end{theorem}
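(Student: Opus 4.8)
The plan is to pin down $\tau(C)$ exactly by squeezing it between two bounds. Since $C$ has degree $2m+1\geq 5$ and only ${\rm ADE}$ singularities, Proposition \ref{propODD1} furnishes the universal upper bound $\tau(C)\leq 3m^2+1$. Hence it suffices to prove the reverse inequality $\tau(C)\geq 3m^2+1$ under the hypothesis \eqref{eqODD}, and to check that the two bounds meet precisely when \eqref{eqODD} is an equality. By Definition \ref{defmaxODD} the curve $C$ is maximizing exactly when $\tau(C)=3m^2+1$, so this sandwich will finish the proof.

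The core is a local-to-global computation of $\tau(C)$. The singular points of $C_1$ lying off $L$ are untouched, while the singular points of $C$ meeting $L$ are exactly the points of $C_1\cap L$; therefore $\tau(C)=\tau(C_1)+\sum_{p\in C_1\cap L}\Delta\tau(C,p)$, where a smooth point of $C_1$ contributes $\tau(C_1,p)=0$. By Proposition \ref{propODD2} every such $p$ falls into one of the four listed cases, and in each case $\Delta\tau(C,p)=\tfrac32 i(C_1,L)_p+\epsilon_p$ with an explicit correction $\epsilon_p$. Summing the main terms and using B\'ezout \eqref{eqBEZ} gives $\sum_{p} i(C_1,L)_p=\deg C_1\cdot\deg L=2m$, so $\sum_p\tfrac32 i(C_1,L)_p=3m$. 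Grouping the corrections by singularity type, one reads off $\epsilon_p=-\tfrac12$ for each transversal node $A_1$ (Case 1 with $k=0$), $\epsilon_p=0$ for each $A_3$ and each transversal $D^t$ (Case 1 with $k=1$ and Case 2), $\epsilon_p=\tfrac{j-1}{2}$ for each $A_{2j+1}$ with $j>1$, $\epsilon_p=\tfrac{j}{2}$ for each $D_{2j+4}^n$ with $j>0$, and $\epsilon_p=\tfrac12$ for each $E_7$. Denoting by $R$ the right-hand side of \eqref{eqODD}, these corrections sum to $\tfrac12\big(R-N(A_1)\big)$.

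Combining this with $\tau(C_1)=3m(m-1)+1-\delta$ yields the clean identity $\tau(C)=3m^2+1+\tfrac12\big(R-N(A_1)-2\delta\big)$. The hypothesis \eqref{eqODD} reads $2\delta+N(A_1)\leq R$, i.e. the bracket is nonnegative, so $\tau(C)\geq 3m^2+1$; together with the upper bound from Proposition \ref{propODD1} this forces $\tau(C)=3m^2+1$ and simultaneously $R-N(A_1)-2\delta=0$, which is exactly equality in \eqref{eqODD}. Thus $C$ is a maximizing curve of degree $2m+1$, as claimed.

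I expect the only delicate point to be the bookkeeping of the half-integer corrections: one must verify that the transversal singularities $A_3$ and $D_j^t$ contribute $\epsilon_p=0$ (explaining their absence from \eqref{eqODD}) and that the surviving contributions reassemble precisely into $\tfrac12 R$, so that $\tau(C)$ comes out as the stated integer. Everything else is the formal sandwich, whose essential input is the du Plessis--Wall bound packaged in Proposition \ref{propODD1}; without that upper bound one would obtain only the inequality $\tau(C)\geq 3m^2+1$ rather than the exact value.
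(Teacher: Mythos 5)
Your proposal is correct and follows essentially the same route as the paper: both reduce the claim to the sandwich $\tau(C)\leq 3m^2+1$ from Proposition \ref{propODD1} versus the lower bound obtained by summing the local corrections $\Delta\tau(C,p)-\tfrac32 i(C_1,L)_p$ from Proposition \ref{propODD2} over $C_1\cap L$ and invoking B\'ezout, with hypothesis \eqref{eqODD} forcing the corrections to total at least $\delta$. Your explicit identity $\tau(C)=3m^2+1+\tfrac12\bigl(R-N(A_1)-2\delta\bigr)$ is just a slightly more packaged form of the paper's final display, and the bookkeeping of the half-integer corrections matches the paper's exactly.
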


\proof
Using Proposition \ref{propODD1}, it is enough to show that $\tau(C)=3m^2+1$. It follows that it is enough to show that
$$\sum_p \Delta \tau(C,p) = 3m+\delta,$$
where the sum is over all points $p \in C_1 \cap L$. Using now Proposition \ref{propODD2} and the equality
$$\sum_{p} i(C_1,L)_p=\deg C_1=2m,$$
coming from \eqref{eqBEZ},
we get
$$\sum_p \Delta \tau(C,p)-3m=
 \sum_p( \Delta \tau(C,p) - \frac{3}{2}i(C_1,L)_p)=$$
 
$$= \frac{-N(A_1)+\sum_{j >1}(j-1)N(A_{2j+1})+\sum_{j >0}jN(D_{2j+4}^n)+N(E_7)} {2}\geq \delta.$$
Since $\tau(C)$ cannot be greater that $3m^2+1$, it follows that we have
equality on the last line above, and hence equality holds in  \eqref{eqODD} as well.
This ends the proof that $C$ is a maximizing curve.
\endproof

\begin{example}
\label{exODD0}
In this Example we construct first a  maximizing sextic, starting with the smooth Fermat cubic
$$C': x^3+y^3+z^3=0.$$
 Consider the secant line $L:z=0$, with meets the cubic
 $C'$ in 3 inflection points, call them $p_1$, $p_2$ and $p_3$.
 Let $L_j$ be the tangent to $C'$ at the inflection point $p_j$ for $j=1,2,3$. Consider the sextic
 $$C_1=C' \cup L_1 \cup L_2 \cup L_3.$$
 Then $C_1$ has $3$ singularities of type $A_5$ at the points $p_j$ for $j \in  \{1,2,3\}$  and one $D_4$-singularity at $q=(0:0:1)$, the common intersection point of the $3$ lines $L_j$. Hence 
 $$\tau(C_1) = 3 \cdot 5 + 4=19$$
 and $C_1$ is a maximizing sextic. To get a maximizing septic, we add the line $L$ and apply Theorem \ref{thmODD} with $\delta =0$.
 
Note that for a general smooth cubic
$$C_t': x^3+y^3+z^3-3txyz=0$$ 
with $t \ne 0$ and $t^3 \ne 1$, the corresponding 3 lines $L_j$ are no longer concurrent and hence the sextic  $C_1=C' \cup L_1 \cup L_2 \cup L_3$ is not maximizing. This brings new evidence to Hirzebruch's conjecture that the maximizing sextics modulo projective equivalence are in finite number, see \cite[bottom of p. 67]{Hirz82}. This question does not seem to be discussed in \cite{Yang}, where the list of possible singularities of maximal sextics is given.

 A similar construction starts with the nodal cubic
 $$C': xyz+x^3+y^3=0$$
 and take $L, L_1,L_2,L_3$ as above. Then the 3 lines $L_j$ are no longer concurrent, and hence instead of a $D_4$ singularity they produce 3 nodes $A_1$. However, the sextic $C_1=C' \cup L_1 \cup L_2 \cup L_3$ is again maximizing, due to the fourth node at $q=(0:0:1)$,
 and the septic $C=C_1 \cup L$ is also maximizing as above.

\end{example}

\begin{example}
\label{exODD1}
In this Example we construct two maximizing curves of degree 7 using Theorem \ref{thmODD}.
The first one, call it $C_7$,  is  obtained from Steiner quartic curve by adding all the three tangents at the singular points. With the notation from Example \ref{P3}, $C_7$ is obtained from the sextic $\mathcal{T}_{6}$ by adding the line $L: z=0$. This sextic is maximizing, hence $\delta=0$ in this case.
The 3 singularities on the line $L$ are
of types $A_1$, $D_4^t$ and $E_7$ on the curve $C_7$. A nice picture of this curve is given in \cite[p. 66]{Hirz82}.

The second curve, call it $C'_7$, is obtained from the curve 
$\mathcal{C}$ in Example \ref{P1} above by adding the line $L:y=0$. 
The 3 singularities on the line $L$ are
of types $D_6^t$, $D_{10}^t$ and $D_{10}^t$ on the curve $C'_7$.
This sextic $\mathcal{C}$  is maximizing, hence again $\delta=0$.

\end{example}
\begin{remark}
\label{rkODD0}
(i) It follows from \cite[Corollary 6.6]{DIS} that if $C_1$ is a maximizing curve of degree $2m$ and $L$ is any line such that $C=C_1 \cup L:f=0$ has only quasi-homogeneous singularities, then $|C_1 \cap L| \geq m$.
Moreover, the equality $|C_1 \cap L| =m$ implies $r=mdr(f)=m-1$, hence one of the conditions a maximizing curve $C$ has to fulfill.

(ii) The condition \eqref{eqODD} in Theorem \ref{thmODD} is necessary. Indeed, the curve of degree $2m+3$ obtained from the curve $C_1=\mathcal C _{2m+2}$ in Example \ref{P5} by adding the line $L:z=0$ is not maximizing.
The singularities of $C=C_1 \cup L$ along the line $L$ are as follows: two $A_1$ singularities and $m$ singularities of type $D_{m+2}^t$.

\end{remark}

Using the same proof as for Theorem  \ref{thmODD}, one can prove the following  result, obtained when $C_2$ is a line $L$ and $C_1$ has even degree.

\begin{theorem}
\label{thmEVEN}
Let $C_1:f_1=0$ be a reduced curve of odd degree $2m+1\geq 3$ having only ${\rm ADE}$ singularities
such that
$$\tau(C_1)=3m^2+1-\delta,$$
for some integer $\delta \geq 0$.
Let $L$ be a line such that the  curve $C =C_1 \cup L$ has only ${\rm ADE}$ singularities and 
\begin{equation}
\label{eqEVEN}
2 \delta +N(A_1) \leq 3+\sum_{j >1}(j-1)N(A_{2j+1})+\sum_{j >0}jN(D_{2j+4}^n)+N(E_7).
\end{equation}
Then the curve $C$ is maximizing of degree $2m+2$ and equality holds in  \eqref{eqEVEN}.
\end{theorem}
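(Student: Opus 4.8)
The plan is to mirror the proof of Theorem \ref{thmODD} almost verbatim, adapting only the bookkeeping that accounts for the parity change. Since $C_1$ has odd degree $2m+1$ and $L$ has degree $1$, the union $C=C_1\cup L$ has even degree $2m+2$, so by Definition \ref{defmax} the curve $C$ is maximizing precisely when $\tau(C)=3(m+1)m+1=3m^2+3m+1$. Because $\tau(C_1)=3m^2+1-\delta$, the condition to verify becomes
$$\sum_p \Delta\tau(C,p)=\tau(C)-\tau(C_1)=3m+3m^2+1-(3m^2+1-\delta)=3m+\delta,$$
where the sum runs over $p\in C_1\cap L$. This is the key numerical target, and the extra additive constant $3$ appearing in \eqref{eqEVEN} (compared to \eqref{eqODD}) will come from the fact that $\deg C_1=2m+1$ rather than $2m$.

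Next I would invoke B\'ezout, \eqref{eqBEZ}, to get $\sum_p i(C_1,L)_p=\deg C_1=2m+1$, so that $\frac{3}{2}\sum_p i(C_1,L)_p=\frac{3(2m+1)}{2}=3m+\frac{3}{2}$. Then, exactly as in Theorem \ref{thmODD}, I would apply Proposition \ref{propODD2} term by term to rewrite
$$\sum_p\Delta\tau(C,p)-\Big(3m+\tfrac32\Big)=\sum_p\Big(\Delta\tau(C,p)-\tfrac32 i(C_1,L)_p\Big),$$
and the right-hand side collects the local contributions listed in Proposition \ref{propODD2}: each $A_{2j+1}$ with $j>1$ contributes $\frac{j-1}{2}$, each non-transversal $D_{2j+4}^n$ contributes $\frac{j}{2}$, each $E_7$ contributes $\frac12$, while each $A_1$ (a transverse intersection, $k=0$ in Case 1) contributes $-\frac12$, and transversal $D_j^t$ singularities contribute $0$. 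Hence
$$\sum_p\Delta\tau(C,p)=3m+\frac{3}{2}+\frac{-N(A_1)+\sum_{j>1}(j-1)N(A_{2j+1})+\sum_{j>0}jN(D_{2j+4}^n)+N(E_7)}{2}.$$

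Setting this equal to the target $3m+\delta$ and clearing denominators yields
$$2\delta+N(A_1)=3+\sum_{j>1}(j-1)N(A_{2j+1})+\sum_{j>0}jN(D_{2j+4}^n)+N(E_7),$$
which is exactly the equality case of \eqref{eqEVEN}. To finish, I would argue as before that the hypothesis \eqref{eqEVEN} gives $\sum_p\Delta\tau(C,p)\geq 3m+\delta$, hence $\tau(C)\geq 3m^2+3m+1$; but by the even-degree bound (the upper bound in Definition \ref{defmax}, equivalently Theorem \ref{maxi} together with Theorem \ref{thm1}) one always has $\tau(C)\leq 3m^2+3m+1$, forcing equality throughout. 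Equality in \eqref{eqEVEN} and maximality of $C$ follow simultaneously. The only genuinely new point relative to Theorem \ref{thmODD} is tracking the half-integer $\frac32$ produced by the odd intersection number $2m+1$; this is precisely what promotes the constant $0$ on the left of \eqref{eqODD} to the constant $3$ on the right of \eqref{eqEVEN}, and it is the one place where the odd degree of $C_1$ must be used carefully rather than copied.
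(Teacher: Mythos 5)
Your proposal is correct and follows exactly the route the paper intends: the paper proves Theorem \ref{thmEVEN} by declaring it ``the same proof as for Theorem \ref{thmODD}'', and you have carried out precisely that adaptation, including the one genuinely new point, namely that B\'ezout now gives $\tfrac32\sum_p i(C_1,L)_p = 3m+\tfrac32$ and this half-integer excess of $\tfrac32$ is what becomes the constant $3$ in \eqref{eqEVEN}, with the final squeeze supplied by the even-degree bound $\tau(C)\leq 3m(m+1)+1$ from the argument of Theorem \ref{maxi}.
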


\begin{example}
\label{exEVEN0}
In this Example, we construct {\it five new examples} of maximizing curves of degree 6 using Corollary \ref{corODD2} and Theorem \ref{thmEVEN}.
The first three examples are constructed by adding a tangent line.
Consider one of the maximizing curves of types $(H2)$, $(H3)$ or $(H4)$ in Corollary \ref{corODD2}, call it $C_1$ and add a new simple tangent $L$ to the irreducible component of $C_1$ of degree $>1$, distinct from the existing tangents if any.
Then along this line $L$, the curve $C=C_1 \cup L$ has one $A_3$ singularity and 3 singularities $A_1$. Using Theorem \ref{thmEVEN} for $\delta=0$ we get that the sextic $C$ is maximizing.

The last two examples are obtained by adding a line passing through a double point. Consider one of the maximizing curves of types $(H1)$ or $(H4)$ in Corollary \ref{corODD2}, call it $C_1$ and add a generic line $L$ passing through the double point $p=(0:1:0)$. Then along this line $L$, the curve $C=C_1 \cup L$ has one $D_4^t$ singularity and 3 singularities $A_1$. Using Theorem \ref{thmEVEN} for $\delta=0$ we get again that the sextic $C$ is maximizing.
\end{example}

\begin{example}
\label{exEVEN1}
In this Example, we construct first a maximizing curve of degree 8 using Theorem \ref{thmEVEN}.
If $C_7$  is  the curve obtained from Steiner quartic curve by adding all the three tangents, and $L$ is a bitangent as in Example \ref{P4}, then
the singularities of $C=C_7 \cup L$ on the line $L$ are
of types $3A_1$, and $2A_3$. Theorem \ref{thmEVEN} implies that $C$ is a maximizing octic, a fact already remarked in Example \ref{P4} above.

Next we regard the maximizing curve $\mathcal C _{2m+2}$ from Example \ref{P5} as obtained from the curve $\mathcal C _{2m+1}$
from Corollary \ref{corODD1} by adding the line $L:y=0$. Note that
$\delta=1$ in this case and the singularities along $L$ are of types
$A_1$ and $m$ times $D_{m+2}^t$. Hence we have equality in \eqref{eqEVEN} as expected, and this gives a new proof for the claim that $\mathcal C_{2m+2}$ is maximizing.

\end{example}

Using the same proof as for Theorem  \ref{thmODD}, one can prove the following  result, obtained when $C_2$ is a smooth conic $Q$ and $C_1$ has even degree.

\begin{theorem}
\label{thmEVEN2}
Let $C_1:f_1=0$ be a reduced curve of even degree $2m \geq 4$ having only ${\rm ADE}$ singularities
such that
$$\tau(C_1)=3m(m-1)+1-\delta,$$
for some integer $\delta \geq 0$.
Let $Q$ be a smooth conic such that the  curve $C = C_1 \cup Q$ has only ${\rm ADE}$ singularities and 
\begin{equation}
\label{eqEVEN2}
2 \delta +N(A_1) \leq \sum_{j >1}(j-1)N(A_{2j+1})+\sum_{j >0}jN(D_{2j+4}^n)+N(E_7).
\end{equation}
Then the curve $C$ is maximizing of degree $2m+2$ and equality holds in  \eqref{eqEVEN2}.
\end{theorem}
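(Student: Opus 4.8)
The plan is to follow verbatim the strategy of the proof of Theorem \ref{thmODD}, replacing the line by the smooth conic $Q$ and adjusting only the two numerical inputs that depend on $\deg C_2$, namely the Bézout count and the target Tjurina number. Since $C=C_1\cup Q$ has even degree $2m+2$ and only ${\rm ADE}$ singularities, the relevant notion of \emph{maximizing} is the one of Definition \ref{defmax}, so it suffices to prove that $\tau(C)=3m^2+3m+1$ (recall $\tau=\sigma$ by Remark \ref{rktau}, and $3\frac{n}{2}(\frac{n}{2}-1)+1=3(m+1)m+1$ for $n=2m+2$).

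First I would establish the lower bound $\tau(C)\ge 3m^2+3m+1$. Writing $\tau(C)=\tau(C_1)+\sum_p\Delta\tau(C,p)$, with the sum over $p\in C_1\cap Q$, and using $\tau(C_1)=3m(m-1)+1-\delta=3m^2-3m+1-\delta$, this amounts to showing $\sum_p\Delta\tau(C,p)\ge 6m+\delta$. By Bézout \eqref{eqBEZ} applied to the conic one has $\sum_p i(C_1,Q)_p=\deg C_1\cdot\deg Q=4m$, hence $\tfrac32\sum_p i(C_1,Q)_p=6m$. Subtracting this constant recasts the claim as $\sum_p\big(\Delta\tau(C,p)-\tfrac32 i(C_1,Q)_p\big)\ge\delta$.

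The next step is purely local and identical to the line case. Because $Q$ is smooth, Proposition \ref{propODD2} applies at every point of $C_1\cap Q$, and summing the four case-by-case values of $\Delta\tau(C,p)-\tfrac32 i(C_1,Q)_p$ yields
$$\sum_p\Big(\Delta\tau(C,p)-\tfrac32 i(C_1,Q)_p\Big)=\frac{-N(A_1)+\sum_{j>1}(j-1)N(A_{2j+1})+\sum_{j>0}jN(D_{2j+4}^n)+N(E_7)}{2}.$$
The hypothesis \eqref{eqEVEN2} says precisely that the numerator is $\ge 2\delta$, so the right-hand side is $\ge\delta$, which gives the lower bound $\tau(C)\ge 3m^2+3m+1$.

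Finally I would supply the matching upper bound, and this is the only place where the argument departs from Theorem \ref{thmODD}: since $C$ now has \emph{even} degree, the bound comes from the even-degree theory used in Theorem \ref{maxi} rather than from Proposition \ref{propODD1}. As $C$ is simply singular, all its singularities are quasi-homogeneous, so \eqref{eqSS} together with Theorem \ref{sern} give $r={\rm mdr}(f)\ge\alpha_C(2m+2)-2>(m+1)-2=m-1$, hence $r\ge m$; since $\tau(n,r)_{max}$ is strictly decreasing in $r$, Theorem \ref{thm1} then forces $\tau(C)\le\tau(2m+2,m)_{max}=3m^2+3m+1$. Combined with the lower bound this gives $\tau(C)=3m^2+3m+1$, so $C$ is maximizing and equality must hold throughout, in particular in \eqref{eqEVEN2}. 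The main obstacle here is conceptual rather than computational: one must verify that Proposition \ref{propODD2}, stated for an arbitrary smooth $C_2$, genuinely covers every intersection pattern of the conic with $C_1$ (it does, since the local model only sees the smooth germ of $Q$ at $p$), so that the sole effect of passing from a line to a conic is to double the Bézout number from $2m$ to $4m$ and to shift the target from the odd-degree bound $3m^2+1$ to the even-degree bound $3m^2+3m+1$.
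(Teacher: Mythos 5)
Your proposal is correct and follows essentially the same route as the paper: decompose $\tau(C)=\tau(C_1)+\sum_p\Delta\tau(C,p)$, use B\'ezout ($\sum_p i(C_1,Q)_p=4m$) together with the local analysis of Proposition \ref{propODD2} to obtain the lower bound $\tau(C)\geq 3m(m+1)+1$, and then conclude from the matching upper bound that equality holds everywhere, in particular in \eqref{eqEVEN2}. If anything, your handling of the upper bound is more careful than the paper's: the paper invokes Proposition \ref{propODD1} (an odd-degree statement) and simply asserts that $\tau(C)$ cannot exceed $3m(m+1)+1$, whereas you derive this bound correctly for the even-degree curve $C$ from \eqref{eqSS}, Theorem \ref{sern} and Theorem \ref{thm1}, i.e., by the argument of Theorem \ref{maxi}.
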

\proof
Using Proposition \ref{propODD1}, it is enough to show that $\tau(C)=3m(m+1)+1$. It follows that it is enough to show that
$$\sum_p \Delta \tau(C,p) = 6m+\delta,$$
where the sum is over all points $p \in C_1 \cap Q$. Using now Proposition \ref{propODD2} and the equality
$$\sum_{p}i(C_1,Q)_p=2 \deg C_1=4m,$$
coming from \eqref{eqBEZ},
we get
$$\sum_p \Delta \tau(C,p) - 6m=
 \sum_p( \Delta \tau(C,p) - \frac{3}{2}i(C_1,Q)_p)=$$
 $$= \frac{-N(A_1)+\sum_{j >1}(j-1)N(A_{2j+1})+\sum_{j >0}jN(D_{2j+4}^n)+N(E_7)} {2}\geq \delta.$$
Since $\tau(C)$ cannot be greater that $3m(m+1)+1$, it follows that we have
equality on the last line above, and hence equality holds in  \eqref{eqEVEN2} as well.
This ends the proof that $C$ is a maximizing curve.
\endproof

\begin{example}
\label{exEVEN2}
In this Example, we construct first a maximizing curve of degree $8$ using Theorem \ref{thmEVEN2}. This curve occurs already in \cite[p. 65]{Hirz82}.
The curve $C_1$ is the following real line arrangement: start with an equilateral triangle and add the 3 lines joining one vertex to the middle point of the opposite side. This is a maximal sextics, since it has $3$ nodes and $4$ triple points, hence
$$\tau(C_1)=3+ 4 \cdot 4=19.$$
Take the conic $Q$ to be the circle passing through the $3$ middle points of the $3$ sides of the triangle, with center the barycenter of this 
equilateral triangle. Then along this conic $Q$, the curve $C=C_1 \cup Q$ has three $D_6^n$ singularities and $3$ singularities $A_1$. Using Theorem \ref{thmEVEN2} for $\delta=0$ we get again that the octic $C$ is maximizing.

The reader can check that the construction of the maximal octic $C=\mathcal C''$ from Example \ref{P2}, which is obtained from the sextic $C_1=\mathcal C$ by adding the smooth conic $Q=C_4$ can also be seen as an illustration of Theorem \ref{thmEVEN2} for $\delta=0$.
In this case, along this conic $Q$, the curve $C$ has two $D_{10}^t$ singularities, 2 singularities $A_1$ and two $D_{6}^n$ singularities.

\end{example}
There is of course a result similar to Theorem \ref{thmEVEN2} where we start with an odd degree curve $C_1$ and add a smooth conic.

\begin{theorem}
\label{thmODD2}
Let $C_1:f_1=0$ be a reduced curve of odd degree $2m+1 \geq 5$ having only ${\rm ADE}$ singularities
such that
$$\tau(C_1)=3m^2+1-\delta,$$
for some integer $\delta \geq 0$.
Let $Q$ be a smooth conic such that the  curve $C =C_1 \cup Q$ has only ${\rm ADE}$ singularities and 
\begin{equation}
\label{eqODD2}
2 \delta +N(A_1) \leq \sum_{j >1}(j-1)N(A_{2j+1})+\sum_{j >0}jN(D_{2j+4}^n)+N(E_7).
\end{equation}
Then the curve $C$ is maximizing of degree $2m+3$ and equality holds in \eqref{eqODD2}.
\end{theorem}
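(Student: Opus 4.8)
The plan is to follow the proof of Theorem \ref{thmEVEN2} almost verbatim, adjusting only the degree-and-parity bookkeeping to the case where $C_1$ has \emph{odd} degree $2m+1$. Since $C=C_1\cup Q$ then has degree $2m+3=2(m+1)+1$, which is odd, the relevant target furnished by Proposition \ref{propODD1} is $\tau(C)=3(m+1)^2+1$. By that proposition it therefore suffices to establish the single numerical equality $\tau(C)=3(m+1)^2+1$; and since $\tau(C_1)=3m^2+1-\delta$, this reduces to showing
$$\sum_p \Delta\tau(C,p)=\tau(C)-\tau(C_1)=6m+3+\delta,$$
where the sum runs over the intersection points $p\in C_1\cap Q$.

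First I would invoke B\'ezout's theorem \eqref{eqBEZ}: since $Q$ is a smooth conic and $\deg C_1=2m+1$, one has $\sum_p i(C_1,Q)_p = 2(2m+1)=4m+2$, so that $\tfrac{3}{2}\sum_p i(C_1,Q)_p = 6m+3$. Then I would apply Proposition \ref{propODD2} point by point, recording the normalized contribution $\Delta\tau(C,p)-\tfrac{3}{2}i(C_1,Q)_p$ for each type: a node $A_1$ (Case 1 with $k=0$) contributes $-\tfrac12$, a point $A_{2j+1}$ with $j>1$ contributes $\tfrac{j-1}{2}$, a transversal $D^t$ contributes $0$, a non-transversal $D^n_{2j+4}$ contributes $\tfrac{j}{2}$, and an $E_7$ contributes $\tfrac12$. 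Summing over all of $C_1\cap Q$ gives
$$\sum_p\Big(\Delta\tau(C,p)-\tfrac{3}{2}i(C_1,Q)_p\Big)=\tfrac12\Big[-N(A_1)+\sum_{j>1}(j-1)N(A_{2j+1})+\sum_{j>0}jN(D_{2j+4}^n)+N(E_7)\Big],$$
and the bracketed quantity is $\geq 2\delta$ precisely by hypothesis \eqref{eqODD2}, so the right-hand side is $\geq\delta$.

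Combining the two displays yields $\sum_p\Delta\tau(C,p)\geq 6m+3+\delta$, i.e. $\tau(C)\geq 3(m+1)^2+1$. On the other hand, Proposition \ref{propODD1} applied to the odd-degree curve $C$ of degree $2(m+1)+1$ gives the reverse bound $\tau(C)\leq 3(m+1)^2+1$. Hence $\tau(C)=3(m+1)^2+1$, so $C$ is maximizing of degree $2m+3$; and, the reverse bound being an equality, the inequality in the middle display must itself be an equality, which forces equality in \eqref{eqODD2}.

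The computation is entirely routine once the contributions of Proposition \ref{propODD2} are in hand, so I do not expect a serious obstacle; the only place demanding care is the parity-and-degree bookkeeping. The essential difference from Theorem \ref{thmEVEN2} is that here the B\'ezout number is $2(2m+1)=4m+2$ rather than $4m$, and the target is the odd maximizing value $3(m+1)^2+1$ rather than the even one. The one point worth double-checking is that no $p\in C_1\cap Q$ can contribute a type absent from the list of Proposition \ref{propODD2}; this is guaranteed by the standing assumption that $C$ has only ${\rm ADE}$ singularities along $Q$, which forces every such $p$ into one of Cases 1--4.
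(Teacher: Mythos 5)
Your proposal is correct and follows exactly the paper's intended argument: the paper proves Theorem \ref{thmODD2} by declaring its proof ``exactly the same as for Theorem \ref{thmEVEN2}'', and your adaptation makes the required adjustments precisely (B\'ezout number $2(2m+1)=4m+2$, target value $3(m+1)^2+1$, upper bound from Proposition \ref{propODD1} applied to the odd-degree curve $C$). The per-point normalized contributions $\Delta\tau(C,p)-\tfrac{3}{2}i(C_1,Q)_p$ you list agree with Proposition \ref{propODD2}, so nothing further is needed.
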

The proof is exactly the same as for Theorem \ref{thmEVEN2}.
However, we have no clear application of Theorem \ref{thmODD2} for the time being.

\begin{remark}
\label{rkODD1}
It is easy to see that there are no line arrangements of odd degree $n=2m+1\geq 5$ which are maximizing curves.
Indeed, a line arrangement $C$ is simply singular if and only if it has only $A_1$ and $D_4$ singularities, hence in the notation from Theorem \ref{sern} we get $\alpha_C \geq 2/3$. Assuming that $C$ is maximizing, would give
$$m-1 \geq \frac{2(2m+1)}{3}-2,$$
which implies $m\leq 1$. The only possibility is $m=1$, and the line arrangement $C:x^3-y^3=0$.

One can list the line arrangements of odd degree $n=2m+1\geq 5$ satisfying the equality in Proposition \ref{propODD1} in case $b)$. Using the same approach as above, we get $m\leq 4$. The case 
$m=2$ yields the arrangement 
$$\mathcal{A}_5: xy(x-z)(y-z)(x-y)=0,$$
the case $m=3$ yields 
the arrangement 
$$\mathcal{A}_7: z(x^2-z^2)(y^2-z^2)(x^2-y^2)=0,$$
and finally the case $m=4$ yields the arrangement
$$\mathcal{A}_9: (x^3-z^3)(y^3-z^3)(x^3-y^3)=0.$$
The reader may check easily that these 3 line arrangements are indeed
satisfying the equality in Proposition \ref{propODD1} in case $b)$.

\end{remark}

\section*{Acknowledgments}

The second author would like to thank  Marco Golla for discussions regarding the content of the paper and for useful remarks. We would like to thank the anonymous referees for their very careful reading of our manuscript and for their suggestions to improve the presentation.

Alexandru Dimca was partially supported by the Romanian Ministry of Research and Innovation, CNCS - UEFISCDI, Grant \textbf{PN-III-P4-ID-PCE-2020-0029}, within PNCDI III.

Piotr Pokora was partially supported by the National Science Center (Poland) Sonata Grant Nr \textbf{2018/31/D/ST1/00177}.
\section*{Conflict of Interest Statement}
 On behalf of all authors, the corresponding author states that there is no conflict of interest.
\section*{Data availability} Not applicable as the results presented in this manuscript rely on no external sources of data or code.

\vskip 0.5 cm

Alexandru Dimca,
Universit\'e C\^ ote d'Azur, CNRS, LJAD, France and Simion Stoilow Institute of Mathematics,
P.O. Box 1-764, RO-014700 Bucharest, Romania. \\
\nopagebreak
\textit{E-mail address:} \texttt{dimca@unice.fr} \\

Piotr Pokora,
Department of Mathematics,
Pedagogical University of Krakow,
Podchor\c a\.zych 2,
PL-30-084 Krak\'ow, Poland. \\
\nopagebreak
\textit{E-mail address:} \texttt{piotr.pokora@up.krakow.pl}


\begin{thebibliography}{000}

\bibitem{Artal}
E. Artal Bartolo, L Gorrochategui, I. Luengo, and A. Melle-Hern\'andez, On some conjectures about free and nearly free divisors. Decker, Wolfram (ed.) et al., Singularities and computer algebra. Festschrift for Gert-Martin Greuel on the occasion of his 70th birthday. Based on the conference, Lambrecht (Pfalz), Germany, June 2015. Cham: Springer (ISBN 978-3-319-28828-4/hbk; 978-3-319-28829-1/ebook). 1-19 (2017).

\bibitem{Bar}
M. Barakat, R. Behrends, Ch. Jefferson, L. K\" uhne, M. Leuner, On the Generation of Rank 3 Simple Matroids with an Application to Terao's Freeness Conjecture. \textit{SIAM J. Discret. Math.} \textbf{35}: 1201 -- 1223 (2021).

\bibitem{Beau}
A. Beauville,
Some surfaces with maximal Picard number, \textit{J. Éc. Polytech., Math.} \textbf{1}: 101 -- 116 (2014).
\bibitem{Bonnafe}
C. Bonnaf\'e, Some singular curves and surfaces arising from invariants of complex reflection groups. \textit{Exp. Math.} \textbf{30(3)}: 429 -- 440 (2021); correction ibid. \textbf{29(3)}: 360 (2020).
\bibitem{Singular}
W.~Decker, G.-M. Greuel, G.~Pfister, and H.~Sch\"onemann,
\newblock {\sc Singular} {4-1-1} --- {A} computer algebra system for polynomial computations. \newblock \url{http://www.singular.uni-kl.de}, 2018.

\bibitem{RCS}  A. Dimca,  Topics on Real and Complex Singularities, Vieweg Advanced Lecture in 
Mathematics, Friedr. Vieweg und Sohn, Braunschweig, 1987.


\bibitem{Dimca1}
A. Dimca, Freeness versus Maximal Global Tjurina Number for Plane Curves. \textit{Math. Proc. Camb. Philos. Soc.} \textbf{163(1)}: 161 -- 172 (2017).

\bibitem{DIS}
A. Dimca, G. Ilardi, G. Sticlaru, Addition-deletion results for the minimal degree of a Jacobian syzygy of a union of two curves. \textit{J. Algebra} \textbf{615(1)}: 77 -- 102 (2023).

\bibitem{DJP}
A. Dimca, M. Janasz, P. Pokora, On plane conic arrangements with nodes and tacnodes. \textit{Innov. Incidence Geom.} \textbf{19(2)}: 47 -- 58 (2022).
\bibitem{DimcaPokora}
A. Dimca and P. Pokora, On conic-line arrangements with nodes, tacnodes, and ordinary triple points. \textit{J. Algebraic Combin.} \textbf{56(2)}: 403 -- 424 (2022).
\bibitem{DimcaSaito}
A. Dimca and M. Saito, Generalization of theorems of Griffiths and Steenbrink to hypersurfaces with ordinary double points. \textit{Bull. Math. Soc. Sci. Math. Roum., Nouv. S\'er.} \textbf{60(108), No. 4}: 351 -- 371 (2017).
\bibitem{DimSer}
A. Dimca and E. Sernesi, Syzygies and logarithmic vector fields along plane curves. (Syzygies et champs de vecteurs logarithmiques le long de courbes planes.) \textit{J. Éc. Polytech., Math.} \textbf{1}: 247 -- 267 (2014).
 \bibitem{DimStic} A. Dimca and G. Sticlaru, Free and Nearly Free Curves vs. Rational Cuspidal Plane Curves. \textit{Publ. Res. Inst. Math. Sci.} \textbf{54(1)}: 163 -- 179 (2018). 
 
 \bibitem{DStmax} A. Dimca, G. Sticlaru, Jacobian syzygies, Fitting ideals, and plane curves with maximal global Tjurina numbers. \textit{Collect. Math.} \textbf{73}: 391 -- 409 (2022).
 
 
\bibitem{duP}
A. A. Du Plessis and C. T. C. Wall, Application of the theory of the discriminant to highly singular plane curves. \textit{Math. Proc. Camb. Philos. Soc.} \textbf{126(2)}: 259 -- 266 (1999).


\bibitem{HS12} S. H. Hassanzadeh and A. Simis, Plane Cremona maps: Saturation and regularity of the base ideal. \textit{J. Algebra} {\bf 371}:  620 -- 652 (2012).

\bibitem{Hirz82} F. Hirzebruch, Some examples of algebraic surfaces. \textit{Contemp. Math}. \textbf{9}: 55 -- 71 (1982).

\bibitem{Hirzebruch}
F. Hirzebruch, Singularities of algebraic surfaces and characteristic numbers. Algebraic geometry, Proc. Lefschetz Centen. Conf., Mexico City/Mex. 1984, Part I, Contemp. Math. \textbf{58}: 141 -- 155 (1986).

\bibitem{Langer}
A. Langer, Logarithmic orbifold Euler numbers with applications. \textit{Proc. London Math. Soc.} \textbf{86}: 358 -- 396 (2003).

\bibitem{Persson}
U. Persson, Horikawa surfaces with maximal Picard numbers. \textit{Math. Ann.} \textbf{259}: 287 -- 312 (1982).

\bibitem{Pokora2}
P. Pokora. \newblock The orbifold Langer-Miyaoka-Yau inequality and Hirzebruch-type inequalities. \newblock \textit{Electron. Res. Announc. Math. Sci.} \textbf{24}: 21 -- 27 (2017).

\bibitem{PSz}
P. Pokora and T. Szemberg, Conic-line arrangements in the complex projective plane. \textit{Discrete Comput. Geom}. \url{https://doi.org/10.1007/s00454-022-00397-6} (2022).

\bibitem{KS} K. Saito, Theory of logarithmic differential forms and logarithmic vector fields. \textit{J. Fac. Sci. Univ. Tokyo Sect. IA Math.} \textbf{27(2)}: 265 -- 291 (1980).


\bibitem{SchenckToh}
H. Schenck and S. Toh\u aneanu, Freeness of conic-line arrangements in $\mathbb{P}^{2}$. \textit{Comment. Math. Helv.} \textbf{84(2)}: 235 -- 258 (2009).

\bibitem{ST} A. Simis and S. Toh\u aneanu, Homology of homogeneous divisors. \textit{Israel J. Math.} \textbf{200}: 449 -- 487 (2014).

\bibitem{Te} H. Terao, Generalized exponents of a free arrangement of hyperplanes and Shephard-Todd-Brieskorn formula. \textit{Invent. Math.} \textbf{63}: 159 -- 179 (1981).

\bibitem{Wall} C.T.C. Wall, Highly singular quintic curves. \textit{Math. Proc. Cambridge Phil. Soc.} \textbf{119(2)}: 257 -- 277 (1996).


\bibitem{Yang}
J.-G. Yang, Sextic curves with simple singularities. \textit{T\^ohoku Math. J., II. Ser.} \textbf{48(2)}: 203 -- 227 (1996).

\end{thebibliography}
\end{document}